\newcommand{\R}{\mathbb R}
\newcommand{\bH}{\mathbf H}
\newcommand{\bI}{\mathbf I}
\newcommand{\bP}{\mathbf P}
\newcommand{\bn}{\mathbf n}
\newcommand{\bp}{\mathbf p}
\newcommand{\bs}{\mathbf s}
\newcommand{\bx}{\mathbf x}
\newcommand{\T}{\mathcal T}
\newcommand{\uDelta}{{\Delta}_{\Gamma}}
\newcommand{\unabla}{{\nabla}_{\Gamma}}
\newcommand{\rd}{\mathrm{d}}
\definecolor{darkred}{rgb}{.7,0,0}
\definecolor{green}{rgb}{0,0.7,0}
\def\dO{{\partial\Omega} }
\def\div{\operatorname{div} }
\newtheorem{remark}{Remark}
\begin{document}

\title{Non-degenerate Eulerian finite element method for solving PDEs on surfaces}
\author{Alexey Y. Chernyshenko\thanks{
Institute of Numerical Mathematics, Russian Academy of Sciences, Moscow 119333}
\and
Maxim A. Olshanskii\thanks{Department of Mathematics, University of Houston, Houston, Texas 77204 and Department of Mechanics and Mathematics, Moscow State University, Moscow 119899 {\tt molshan@math.uh.edu}}
}

\maketitle

\markboth{}{}

\begin{abstract}  The paper studies a method for solving elliptic partial differential equations
posed on hypersurfaces in $\mathbb{R}^N$, $N=2,3$. The method builds upon the formulation introduced in Bertalmio et al., J. Comput. Phys., 174 (2001), 759--780., where  a surface equation is extended to a neighborhood of the surface.
The resulting degenerate PDE is then solved in one dimension higher, but can be solved on a mesh
that is unaligned to the surface.
We introduce another extended formulation, which leads to  uniformly elliptic (non-degenerate) equations
in a bulk domain containing the surface. We apply a finite element method to solve this extended PDE
and prove the convergence of finite element solutions restricted to the surface to the solution of the
original surface problem. Several numerical examples illustrate the properties of the method.
 \end{abstract}

\begin{keywords}
 Surface, PDE,  level set method, finite element method
\end{keywords}



\section{Introduction}
Partial differential equations posed on surfaces arise in mathematical models for many natural phenomena:
diffusion along grain boundaries \cite{grain2}, lipid interactions in biomembranes \cite{membrains1}, and transport of surfactants on multiphase flow interfaces \cite{GrossReuskenBook}, as well as in many engineering and bioscience applications: vector field visualization \cite{vector}, textures synthesis \cite{texture1}, brain warping \cite{imaging2}, fluids in lungs \cite{lungs} among others.
Thus, recently there has been a significant increase of interest  in developing and analyzing numerical methods for  PDEs on surfaces.

The development of numerical methods based on surface triangulation started with the paper  of Dziuk  \cite{Dziuk88}. In this class of methods, the surface is approximated by a family of consistent  regular triangulations. It is typically assumed that all vertices in the triangulations lie on the surface.
In  \cite{Dziuk07} the  method from \cite{Dziuk88} was combined with Lagrangian surface tracking and  was generalized to equations on  evolving surfaces. To avoid surface triangulation and remeshing, another approach was taken in \cite{BCOS01}: It was proposed to extend a partial differential equation from the surface to a set of positive Lebesgue measure in $\R^3$. The resulting PDE is then solved in one dimension higher, but can be solved on a mesh that is unaligned to the surface. If the surface evolves, the approach allows to avoid a Lagrangian description of the surface evolution and is commonly referred to as Eulerian approach~\cite{XuZhao}.

Despite clear advantages, the method from~\cite{BCOS01} has a number of drawbacks, see \cite{DDEH,Greer06} for the careful account of pros and cons of the approach. In particular, the resulting bulk elliptic or parabolic PDEs are degenerate, since no diffusion acts in the direction normal to the surface. Setting  boundary conditions in a numerical method for such problem is another issue.
An attempt to overcome the degeneracy and related issues was made in \cite{Greer06}, where a modification
of the method was introduced for parabolic problems.

The method for surface equations in the present paper benefits from the modification introduced by Greer in \cite{Greer06} of the formulation from \cite{BCOS01}. We develop a new extended formulation, which leads to a uniformly elliptic equations
in a bulk domain containing the surface. The formulation preserves all advantages of the one from
 \cite{Greer06}, but adds diffusion in the normal direction in a more consistent way and avoids introducing
 additional parameters. Further, we consider a Galerkin (finite element) method for solving the extended equation. Taking the advantage of the non-degeneracy of the  extended formulation, we prove error estimates  in the $L^2$ and $L^\infty$ surface norms. To the best of our knowledge, such estimates were previously unknown  for an Eulerian surface finite element method based on extension of PDE from the surface.

Another Eulerian finite element method for elliptic equations posed on surfaces was introduced in~\cite{ORG09,OlsR2009}. That method does not use an extension of the surface partial differential equation.  It is instead based on a restriction (trace) of the outer  finite element spaces to a surface.
It is not the intension of this paper to compare this different approaches.

The remainder of the paper is organized as follows.  Section~\ref{s_prel} collects some necessary definitions and preliminary results. In section~\ref{s_form}, we recall the extended PDE approach from \cite{BCOS01} and
the modified formulation from~\cite{Greer06}. Further, we introduce the new formulation and show its well-posedness. In section~\ref{s_FEM}, we consider a finite element method and prove error estimates. Section~\ref{s_numer} presents the result of several numerical experiments that demonstrate the performance of the finite element method. Finally, section~\ref{s_concl} collects some closing remarks.


\section{Preliminaries}\label{s_prel}
We assume that $\Omega$ is an open subset in $\R^N$, $N=2,3$ and $\Gamma$ is a connected $C^2$ compact
hypersurface contained in $\Omega$.
For a sufficiently smooth function $g: \Omega \to \R$ the tangential
gradient (along $\Gamma$) is defined by
\begin{equation} \label{tangderi}
  \unabla g=\nabla g - \nabla g \cdot \bn_\Gamma \, \bn_\Gamma.
\end{equation}
By $\uDelta$ we denote  the Laplace--Beltrami operator on
$\Gamma$, $\uDelta=\nabla_\Gamma\cdot\nabla_\Gamma$.

This paper deals with elliptic  equations posed on $\Gamma$. As a basic elliptic equation, we
consider the  Laplace--Beltrami  problem:
\begin{equation}\label{LBeq}
-\uDelta u + \alpha\, u =f\quad\text{on}~\Gamma,
\end{equation}
with some $\alpha\in L^\infty(\Gamma)$.
The corresponding weak form of \eqref{LBeq} reads: For given $f\in L^2(\Gamma)$  determine $u \in H^1(\Gamma)$ such that
\begin{equation}\label{weak}
\int_\Gamma \unabla u \unabla v+\alpha\,uv\,\rd\bs= \int_\Gamma f v\, \rd\bs\qquad\text{for all}~~v\in H^1(\Gamma).
\end{equation}
For the well-possedness of \eqref{weak}, it is sufficient to assume $\alpha$  to be strictly positive on a subset of $\Gamma$ with
positive surface measure:
\begin{equation}\label{cA}
\mathcal{A}:=\operatorname{meas}_{\bs}\{\bx\in\Gamma\,:\, \alpha(\bx)\ge {\alpha}_0\}>0,
\end{equation}
with some $\alpha_0>0$.
In this case,  the following Friedrich's type inequality~\cite{Sobolev} (see, also Lemma~3.1 in \cite{ORG09}):
\begin{equation}\label{Fried}
\|v\|_{L^2(\Gamma)}^2 \le C_F(\|\nabla_\Gamma v\|_{L^2(\Gamma)}^2+\|\sqrt{\alpha} v\|_{L^2(\Gamma)}^2)\quad
\forall~v\in H^1(\Gamma)
\end{equation}
holds with a constant $C_F$ dependent of $\alpha_0$ and $\mathcal{A}$.

The solution $u$ to \eqref{weak} is unique and  satisfies $u\in H^2(\Gamma)$, with $\|u\|_{H^2(\Gamma)}\le
c\|f\|_{L^2(\Gamma)}$ and a constant $c$ independent of $f$, cf.~\cite{Dziuk88}.
We remark that the case $\alpha=0$ is also covered by the analysis of the paper. In this case, the Friedrich's  inequality \eqref{Fried} holds for all $v\in H^1(\Gamma)$ with zero mean.
Hence, if $\alpha=0$, we assume $\int_\Gamma f\,\rd\bs=0$ and look for the unique solution to \eqref{weak} satisfying  $\int_\Gamma u\,\rd\bs=0$.
\medskip

Denote by $\Omega_d$ a domain consisting of all points within a distance from $\Gamma$ less than some $d>0$:
\begin{equation}\label{Omega_d}
\Omega_d = \{\, \bx \in  \R^N~:~{\rm dist}(\bx,\Gamma) < d\, \}.
\end{equation}
Let $\phi: \Omega_d \rightarrow \R$ be the
signed distance function, $|\phi(x)|:={\rm dist}(\bx,\Gamma)$ for all
$\bx \in \Omega_d$. The surface $\Gamma$ is the zero level
set of $\phi$:
\begin{equation}\label{lsf}
\Gamma=\{\bx\in\mathbb{R}^N\,:\,\phi(\bx)=0\}.
\end{equation}
We may assume $\phi < 0$ on the interior of $\Gamma$  and $\phi >0$ on the exterior.
We define $\bn(\bx):=\nabla \phi(\bx)$ for all
$\bx \in \Omega_d$. Thus,
$\bn$ is the outward normal vector on $\Gamma$, $\bn_{\Gamma}
=\nabla \phi$ on $\Gamma$,  and $|\bn (\bx)|=1$ for all $\bx\in \Omega_d$.  The Hessian of $\phi$ is denoted by $\bH$:
\begin{equation} \label{Hessian}
  \bH(\bx)=D^2\phi(\bx) \in \R^{3 \times 3} \quad \text{for all} ~~\bx \in \Omega_d.
\end{equation}
The eigenvalues of $\bH(\bx)$ are denoted by $\kappa_1(\bx),
\kappa_2(\bx)$, and 0. For  $\bx \in \Gamma$, the eigenvalues $\kappa_i(\bx)$, $i=1,2$, are
the principal curvatures.

We will need the orthogonal projection
\[
 \bP(\bx)= \bI-\bn(\bx)\otimes\bn(\bx) \quad \text{for all}~~\bx \in \Omega_d.
\]
Note that the tangential gradient can be written as $\unabla g(\bx)= \bP \nabla g(\bx)$ for
$\bx \in \Gamma$.
We introduce a locally
orthogonal coordinate system by using the  projection $\bp:\, \Omega_d \rightarrow
\Gamma$:
\[
 \bp(\bx)=\bx-\phi(\bx)\bn(\bx) \quad \text{for all}~~\bx \in \Omega_d.
\]
Assume that the decomposition $\bx=\bp(\bx)+ \phi(\bx)\bn(\bx)$ is unique for all $\bx \in \Omega_d$.
We shall use an extension operator  defined as follows. For a  function $v$ on $\Gamma$
we define
\begin{equation} \label{extension}
 v^e(\bx):= v(\bp(\bx)) \quad \text{for all}~~\bx \in \Omega_d.
\end{equation}
Thus, $ v^e$ is the extension of $v$ along normals on $\Gamma$, it satisfies $\bn\cdot \nabla v^e=0$ in $\Omega_d$, i.e., $v^e$ is constant along normals to $\Gamma$.

\section{Extended surface PDEs} \label{s_form}
In this section, we define an extension of the surface PDE \eqref{LBeq} to a neighborhood of $\Gamma$.
Recalling \eqref{lsf} and   $\unabla u=\bP\nabla u$ on $\Gamma$, we write the weak formulation of  \eqref{LBeq} on the zero level set of $\phi$:
\[
\int_{\Gamma}\unabla u\cdot\unabla v + \alpha\,uv-fv\,d\bs=\int_{\{\phi=0\}}\bP\nabla u\cdot \bP\nabla v + c\,uv-fv\,d\bs=0.
\]
The idea of \cite{BCOS01} is to extend \eqref{weak}, with the help of globally defined quantities $\bn$
  and $\bP$, to every level set of $\phi$ intersecting $\Omega_d$: Find $u\in H_P$ such that
 \begin{equation}\label{weakV}
\begin{split}
0&=\int_{-d}^{+d}\int_{\{\phi=r\}}\bP\nabla u\cdot \bP\nabla v + \alpha^e\,uv-f^ev\,d\bs\, dr\\
&=\int_{\Omega_d}(\bP\nabla u\cdot \bP\nabla v + \alpha^e\,uv-f^ev)|\nabla\phi| d\bx \quad \hbox{ for all } v\in H_P,
\end{split}
\end{equation}
where
\[
H_P=\{v\in L^2(\Omega_d)\,:\,\bP\nabla v\in (L^2(\Omega_d))^{N}\}.
\]
The above weak formulation was shown to be well-posed in \cite{Burger}. The surface  equation \eqref{weak} is embedded in \eqref{weakV} and the solution on every level set of $\phi$ does not depend
on a data in a neighborhood of this level set (indeed, one can consider \eqref{weakV} as a collection of
of mutually independent surface problems on every level set of $\phi$). Hence, restricted to $\Gamma$, smooth solution to   \eqref{weakV} solves the original Laplace-Beltrami  problem \eqref{LBeq}. With no ambiguity,
we shall denote by $u$ both the solutions to surface and extended problems.

The corresponding strong formulation of  \eqref{weakV} is
 \begin{equation}
\label{1.2}
-|\nabla\phi|^{-1}\div|\nabla\phi| \bP\nabla u+ \alpha^e\,u=f^e\quad  \hbox{ in } \Omega_d.
\end{equation}
We note that \eqref{weakV}  and \eqref{1.2} are the valid extensions of \eqref{weak} and \eqref{LBeq}
if $\phi$ is an arbitrary smooth level set function with $\nabla\phi\neq0$, not necessarily a signed distance function,
and $\alpha^e, f^e$ are not necessarily constant along normal directions. If the boundary of the volume domain $\Omega_d$ is not a level set of $\phi$, then \eqref{1.2} should be complemented with boundary conditions. This can be natural boundary conditions
\begin{equation}
\label{1.2bc}
(\bP\nabla u)\cdot\bn_\dO=0\quad  \hbox{ on } \dO_d,
\end{equation}
where $\bn_\dO$ is the outward normal vector to $\dO_d$.

The major numerical advantage of the extended formulation is that one may apply standard
discretization methods to solve  \eqref{1.2}--\eqref{1.2bc} in the volume domain $\Omega_d$ (e.g., a finite difference method on Cartesian grids) and further take the trace of  computed solutions on $\Gamma$ (or on a approximation of $\Gamma$). Numerical experiments from \cite{BCOS01,Burger,Greer06,XuZhao} suggest that these traces of numerical solutions  are reasonably good approximations to the solution of the surface problem  \eqref{LBeq}. The analysis of the method is still limited: Error estimates for finite element methods for \eqref{weakV} are shown in~\cite{Burger,DDEH}.
Error estimate in \cite{Burger} is established only in the integral volume norm
\[
\|v\|_{H_P}^2:=\|v\|^2_{L^2(\Omega_d)}+\|\bP\nabla v\|^2_{L^2(\Omega_d)},
\]
rather than in a surface norm for $\Gamma$. In \cite{DDEH} the first order convergence was proved in the surface $H^1$ norm, if the band width $d$ in \eqref{Omega_d} is of the order of mesh size and if a quasi-uniform triangulation of $\Omega$ is assumed. For linear elements this estimate is of the optimal order.

Although numerically convenient, the extended formulation has a number of disadvantages, as noted already
in   \cite{BCOS01} and reviewed in \cite{DDEH,Greer06}. The volume formulation \eqref{1.2} is defined in a domain in one dimension higher than the surface equation. This leads to involving extra degrees of freedom in numerical method. If $\Omega_d$ is a narrow band around $\Gamma$, then handling boundary conditions \eqref{1.2bc} may effect the quality of the discrete solution. This can be an issue for grids not aligned with a level set
of $\phi$ on $\dO_d$.  Numerical stability calls for the extension of data satisfying \eqref{extension}; and in time-stepping schemes for parabolic problems, one needs the intermittent re-initialization of $u$ by
re-extending it from $\Gamma$ according to \eqref{extension}.
Another issue of the extended formulation \eqref{1.2} is that the second order term is degenerate, since no diffusion acts in the direction normal to level sets of $\phi$.  Numerical solution of degenerate elliptic and parabolic equations is not a very  well understood subject.

An effort to overcome the degeneracy and some related issues of the approach from \cite{BCOS01} was done by Greer in~\cite{Greer06}, where the heat equation
 \begin{equation}
\label{heat}
\frac{\partial u}{\partial t} - \Delta_\Gamma u=0,\quad u|_{t=0}=u_0
\end{equation}
on a stationary surface $\Gamma$ was studied. In the method from  \cite{Greer06},
 one extends \eqref{heat} to a   neighborhood of $\Gamma$ ensuring the following properties hold:
\begin{enumerate}
\item $\phi$ is the singed distance function;
\item $u_0$ is extended to the neighborhood of $\Gamma$  according to \eqref{extension}, i.e., constant alone normals;
\item The projection $\bP$ is changed to the (non-orthogonal) scaled projection
\begin{equation}\label{tildeP}
\widetilde{\bP}:=(\bI-\phi \bH)^{-1} \bP
\end{equation}
on  tangential planes of the level sets of $\phi$. The bulk domain  $\Omega_d$ is assumed such that the modified projection $\widetilde{\bP}$  is well defined. For a smooth $\Gamma$, this can be always enured by choosing small enough $d>0$.
\end{enumerate}
With the above assumptions, the solution to the extended heat equation
 \begin{equation}
\label{heatExt}
\frac{\partial u}{\partial t} - (\widetilde{\bP}\nabla)\cdot\widetilde{\bP}\nabla u=0,\quad u|_{t=0}=u_0^e\quad \text{in}~~\Omega_d
\end{equation}
is proved to be constant in normal directions:
\begin{equation}\label{const}
(\bn\cdot\nabla) u=0\quad\text{in}~\Omega_d
\end{equation}
for all $t>0$.

The property \eqref{const} is crucial, since it allows to add diffusion in the normal direction without altering solution. Doing this, one obtains a non-degenerated elliptic operator. Thus, instead of \eqref{heatExt}  it was suggested in~\cite{Greer06} to consider the parabolic problem
 \begin{equation}
\label{heatExt1}
\frac{\partial u}{\partial t} - (\widetilde{\bP}\nabla)\cdot\widetilde{\bP}\nabla u-c^2_n\div(\bn\otimes\bn)\nabla u=0,\quad u|_{t=0}=u_0\quad \text{in}~~\Omega_d,
\end{equation}
 with a coefficient  $c^2_n$. For the planar case, $\Omega_d\in\mathbb{R}^2$, it was recommended to set $c_n=(1-\phi\kappa_0)$,  $\kappa_0=\kappa(p(\bx))$, $\kappa$ is the curvature of $\Gamma$ ($\Gamma$ is a curve in this case).  For the case of surfaces embedded in $\mathbb{R}^3$,
 there was no clear recommendation on $c_n$.

The above approach formally solves the problem of the degeneracy and suggests that equation \eqref{const}
on $\dO_d$ is appropriate and numerically sound boundary condition.  However,  one has to define
parameter $c_n$. Moreover, the new extended formulation involves the Hessian $\bH$. If $\Gamma$ is
given only by an approximation, for example, as the zero set of a discrete level set function $\phi_h$,
then computing (an approximation to) $\bH$ is a delicate issue, sensitive to numerical implementation.
\medskip

Below we introduce a formulation of the extended surface problem, which `automatically' generates
diffusion in the normal direction, leading to a uniformly elliptic or parabolic problem in $\Omega_d$.
The finite element method and error analysis are considered in the section~\ref{s_FEM}.  The problem of
the approximate evaluation of Hessian is addressed numerically in section~\ref{s_numer}.

\subsection{Another extension of surface PDE}
For the sake of analysis, consider the Laplace-Beltrami  equation \eqref{LBeq} rather than the surface heat equation.
We assume from now that all extensions of data from $\Gamma$ satisfy \eqref{extension}. Consider the Laplace--Beltrami equation extended from $\Gamma$ to $\Omega_d$:
 \begin{equation}
\label{1.1Ext}
-(\widetilde{\bP}\nabla)\cdot\widetilde{\bP}\nabla u+\alpha^e\, u=f^e\quad \text{in}~~\Omega_d.
\end{equation}

To  ensure that $\widetilde{\bP}$ is well-defined and equations \eqref{1.1Ext} are well-possed, it is sufficient for
the matrix $(\bI-\phi\bH)$ to be  uniformly positive definite in  $\Omega_d$. Therefore, assume $\Omega_d$ is such that
 \begin{equation}\label{ass1}
 |\phi(\bx)|=\mbox{dist}(\bx,\Gamma)\le \frac12\|\bH(\bx)\|^{-1}\quad\forall\,\bx\in\Omega_d.
\end{equation}
One can always satisfy the above restriction by choosing the band width $d$ small enough. To be more precise, from (2.5) in \cite{DD07} we have the following formula for the eigenvalues of $\bH$:
\begin{equation*}
\kappa_i(\bx)= \frac{\kappa_i(\bp(\bx))}{1 + \phi(\bx)\kappa_i(\bp(\bx))}\quad \text{for} ~\bx \in \Omega_d.
\end{equation*}
Thus, assumption \eqref{ass1} is true if  the parameter $d$ in \eqref{Omega_d}
satisfies
\[
d\le \Big(\,4\max_{\bx\in\Gamma}(|\kappa_1(\bx)|+|\kappa_2(\bx)|)\,\Big)^{-1}.
\]
Since $\Gamma\in C^2$ and $\Gamma$ is compact, the principle curvatures of $\Gamma$ are uniformly bounded and $d$ can be chosen sufficiently small positive.

The weak formulation of the problem \eqref{1.1Ext} reads: Find $u\in H_P$ satisfying
\begin{equation}\label{weakV1}
\int_{\Omega_d}\widetilde{\bP}\nabla u\cdot \widetilde{\bP}\nabla v + \alpha^e\,uv\,d\bx=\int_{\Omega_d}f^ev d\bx \quad \forall\, v\in H_P.
\end{equation}
If \eqref{ass1} holds, the existence of the unique solution to \eqref{weakV1} follows from the Lax-Milgram lemma.
If  the solution to \eqref{weakV1} is smooth, it solves the surface problem \eqref{LBeq} ($\bP=\widetilde{\bP}$ on $\Gamma$). Moreover, the  smooth solution to \eqref{weakV1} satisfies \eqref{const}. To see this, apply $(\bn\cdot\nabla)$ to equation \eqref{1.1Ext}  and use the
following commutation property (see lemma~1 in \cite{Greer06}):
\[
 (\bn\cdot\nabla)((\widetilde{\bP}\nabla)\cdot\widetilde{\bP}\nabla)=
((\widetilde{\bP}\nabla)\cdot\widetilde{\bP}\nabla)(\bn\cdot\nabla).
\]
Recalling $(\bn\cdot\nabla)\alpha^e=(\bn\cdot\nabla)f^e=0$, we get for $v_n:=(\bn\cdot\nabla)u$
\[
-(\widetilde{\bP}\nabla)\cdot\widetilde{\bP}\nabla v_n+\alpha^e\, v_n=0\quad \text{in}~~\Omega_d.
\]
The uniqueness result yields \eqref{const}.

Note that the identity $\bH \bP=\bP\bH$ implies
\begin{equation}\label{Haux}
(\bI-\phi\bH)^{-1}\bP=\bP(\bI-\phi\bH)^{-1}.
\end{equation}
Using \eqref{Haux} and $\bP^2=\bP=\bP^T$, we rewrite \eqref{weakV1} as
\begin{equation}\label{weakMIN}
\int_{\Omega_d}(\bI-\phi\bH)^{-1}\bP\nabla u\cdot (\bI-\phi\bH)^{-1}\nabla v + \alpha^e\,uv\,d\bx=\int_{\Omega_d}f^ev\,d\bx
\quad \forall\, v\in H^1(\Omega_d).
\end{equation}

Due to relations $|(\bI-\bP)\nabla u|^2=|(\bn\cdot\nabla u)\bn|^2=(\bn\cdot\nabla u)^2$, we can rewrite
equality \eqref{const} for the solution to \eqref{weakMIN} in the form
\begin{equation*}\label{Paux}
(\bI-\bP)\nabla u=0.
\end{equation*}
Thanks to \eqref{Haux}, it holds
\[
\begin{split}
(\bI-\phi\bH)^{-1}\nabla u&=\bP(\bI-\phi\bH)^{-1}\nabla u +  (\bI-\bP)(\bI-\phi\bH)^{-1}\nabla u\\
&=(\bI-\phi\bH)^{-1}\bP\nabla u + (\bI-\phi\bH)^{-1}(\bI-\bP)\nabla u\\
&=(\bI-\phi\bH)^{-1}\bP\nabla u\quad\text{for}~u~\text{solving}~\eqref{weakMIN}.
\end{split}
\]
We infer that the problem \eqref{weakMIN} can be written as follows: Find $u\in H^1(\Omega_d)$
\begin{equation}\label{weakMINa}
\int_{\Omega_d}(\bI-\phi\bH)^{-2}\nabla u\cdot \nabla v + \alpha^e\,uv\,d\bx=\int_{\Omega_d}f^ev\,d\bx
\quad \hbox{ for all } v\in H^1(\Omega_d).
\end{equation}
Now we find the strong form of \eqref{weakMINa}. To handle boundary terms arising from integration by part,
we note that $\bn=\bn_\dO$, since the boundary of the volume domain $\Omega_d$ is a level set of $\phi$. Furthermore, $\bH\bn=0$ implies $(\bI-\phi\bH)^{-1}\bn=\bn$, and so
\[
((\bI-\phi\bH)^{-1}\nabla v) \cdot \bn= (\nabla v) \cdot ((\bI-\phi\bH)^{-1}\bn)=(\bn\cdot\nabla) v.
\]
Thus, one can write \eqref{weakMINa} in the strong form:
 \begin{equation}
\label{ExtNew}
\begin{split}
-\div(\bI-\phi\bH)^{-2}\nabla u+\alpha^e\, u&=f^e\quad \text{in}~~\Omega_d\\
(\bn\cdot\nabla) u&=0\quad \text{on}~~\dO_d.
\end{split}
\end{equation}

The formulation \eqref{ExtNew} has the following advantages over \eqref{1.2},  \eqref{heatExt1}  and  \eqref{1.1Ext}:
 The equations  \eqref{ExtNew} are non-degenerate and uniformly elliptic, the extended problem has no parameters to be defined,
  the boundary conditions are given and consistent with the solution property \eqref{const}.

Regarding the well-posedness of  \eqref{ExtNew} we prove the following result.

\begin{theorem}\label{Th1} Assume \eqref{ass1}, then it holds:
\begin{description}
\item{}(i) The problem~\eqref{ExtNew} has the unique weak solution
$u\in H^1(\Omega_d)$, which satisfies $\|u\|_{H^1(\Omega_d)}\le C\,\|f^e\|_{L^2(\Omega_d)}$,
with a constant $C$ dependent only on $\alpha$, $\Gamma$ and $d$;
\item{}(ii)  Additionally assume $\Gamma\in C^3$, then $u\in H^{2}(\Omega_d)$ and
\[
\|u\|_{H^2(\Omega_d)}\le C\,\|f^e\|_{L^2(\Omega_d)},
\]
where the constant $C$ depends only on  $\alpha$, $\Gamma$ and $d$.
\end{description}
\end{theorem}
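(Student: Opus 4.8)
For part (i) the plan is to work with the weak form \eqref{weakMINa}: find $u\in H^1(\Omega_d)$ with $a(u,v)=\int_{\Omega_d}f^ev\,\rd\bx$ for all $v\in H^1(\Omega_d)$, where
\[
a(u,v):=\int_{\Omega_d}(\bI-\phi\bH)^{-2}\nabla u\cdot\nabla v\,\rd\bx+\int_{\Omega_d}\alpha^e\,uv\,\rd\bx,
\]
and to verify the Lax--Milgram hypotheses on the Hilbert space $V:=H^1(\Omega_d)$ (no essential boundary condition enters, since, as was observed just before the statement, $(\bn\cdot\nabla)u=0$ is exactly the natural, conormal condition for $a(\cdot,\cdot)$). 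Boundedness is immediate: by \eqref{ass1} the eigenvalues of $\bI-\phi\bH$ lie in $[\tfrac12,\tfrac32]$, so $\tfrac49\bI\le(\bI-\phi\bH)^{-2}\le 4\bI$ on $\Omega_d$, while $\alpha^e\in L^\infty(\Omega_d)$ with $\|\alpha^e\|_{L^\infty}=\|\alpha\|_{L^\infty(\Gamma)}$; hence $|a(u,v)|\le C\|u\|_{H^1(\Omega_d)}\|v\|_{H^1(\Omega_d)}$. The only real point is coercivity, for which one needs a Friedrich-type inequality in the bulk,
\begin{equation*}
\|v\|_{L^2(\Omega_d)}^2\le C\big(\|\nabla v\|_{L^2(\Omega_d)}^2+\|\sqrt{\alpha^e}\,v\|_{L^2(\Omega_d)}^2\big)\qquad\forall\,v\in H^1(\Omega_d).
\end{equation*}

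I would derive this by fibering $\Omega_d$ over $\Gamma$ via the decomposition $\bx=\bp(\bx)+\phi(\bx)\bn(\bx)$. Writing $\Gamma_r=\{\phi=r\}$ and using the co-area formula (exact, since $|\nabla\phi|=1$), for $v\in H^1(\Omega_d)$ and a.e. $r\in(-d,d)$ the pullback $\bs\mapsto v(\bs+r\bn(\bs))$ lies in $H^1(\Gamma)$; its tangential gradient differs from $\nabla_{\Gamma_r}v=\bP\nabla v|_{\Gamma_r}$ by the linear map $(\bI-\phi\bH)^{-1}$ restricted to tangent planes (equivalently $\bI+r\bH$ on $T\Gamma$), and the surface measures on $\Gamma_r$ and $\Gamma$ differ by the Jacobian $\prod_i(1+r\kappa_i)$ — all of these factors, and their inverses, being bounded uniformly in $r$ by constants depending only on $\Gamma$ (via compactness and $\Gamma\in C^2$) and $d$, by \eqref{ass1}. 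Applying \eqref{Fried} on $\Gamma$ to this pullback (noting $\alpha^e$ is constant along normals, so it reads as $\alpha$ on $\Gamma$), transplanting the resulting inequality back to $\Gamma_r$, integrating over $r\in(-d,d)$, and using $\|\bP\nabla v\|_{L^2(\Omega_d)}\le\|\nabla v\|_{L^2(\Omega_d)}$ yields the displayed bound. Combined with the lower ellipticity bound this gives $a(v,v)\ge\tfrac49\|\nabla v\|_{L^2(\Omega_d)}^2+\|\sqrt{\alpha^e}v\|_{L^2(\Omega_d)}^2\ge c\|v\|_{H^1(\Omega_d)}^2$, so Lax--Milgram produces the unique $u\in H^1(\Omega_d)$ with $\|u\|_{H^1(\Omega_d)}\le c^{-1}\|f^e\|_{L^2(\Omega_d)}$; the constant depends only on $\alpha$, $\Gamma$, $d$ through $C_F$, $\|\alpha\|_{L^\infty(\Gamma)}$ and the fibering constants. (For $\alpha\equiv0$ one runs the same argument on the closed subspace of $H^1(\Omega_d)$ of functions whose trace on $\Gamma$ has zero mean, with the zero-mean version of \eqref{Fried}; alternatively, since $\Omega_d$ is connected and $\alpha^e\ge\alpha_0$ on the tube $\{\bx\in\Omega_d:\bp(\bx)\in\A\}$ of positive measure, one may invoke a generalized Poincar\'e inequality on $\Omega_d$.)

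For part (ii) I would appeal to standard elliptic regularity for the conormal (Neumann) problem. The key structural remark is that the boundary condition in \eqref{ExtNew} is precisely the conormal derivative of the coefficient matrix $\bA:=(\bI-\phi\bH)^{-2}$: since $\bH\bn=0$ we have $\bA\bn=\bn$, and, because $\bn=\bn_\dO$ on $\dO_d$, $(\bA\nabla u)\cdot\bn_\dO=\nabla u\cdot(\bA\bn)=(\bn\cdot\nabla)u$. Thus $u$ is the weak solution of a uniformly elliptic divergence-form equation with symmetric coefficients and a homogeneous conormal condition. The coefficient $\bA$ is Lipschitz on $\overline{\Omega_d}$: for $\Gamma\in C^3$ the signed distance $\phi$ is $C^3$ on $\Omega_d$, so $\bH=D^2\phi\in C^1\subset W^{1,\infty}(\Omega_d)$ with norm controlled by $\Gamma$ and $d$; moreover $\dO_d=\{\phi=d\}\cup\{\phi=-d\}$ consists of two $C^3$ (hence $C^{1,1}$) hypersurfaces. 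Under these hypotheses the classical difference-quotient argument — interior estimates together with tangential difference quotients near $\dO_d$ after flattening the boundary, and then recovery of the purely normal second derivative from the PDE — gives $u\in H^2(\Omega_d)$ with $\|u\|_{H^2(\Omega_d)}\le C(\|f^e\|_{L^2(\Omega_d)}+\|u\|_{L^2(\Omega_d)})$, $C=C(\alpha,\Gamma,d)$. Combining with the bound from (i) gives the stated estimate.

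The main obstacle is the coercivity step in (i): one must set up the fibering of $\Omega_d$ over $\Gamma$ carefully and check that the gradient- and measure-distortion constants between $\Gamma_r$ and $\Gamma$ are uniform in $r\in(-d,d)$ — this is where compactness and $C^2$-regularity of $\Gamma$, together with \eqref{ass1}, are used. Once the bulk Friedrich inequality is in hand, part (i) is a routine application of Lax--Milgram, and part (ii) reduces to off-the-shelf elliptic regularity, the only thing worth pointing out being the conormal nature of the boundary condition.
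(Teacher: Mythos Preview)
Your proposal is correct and matches the paper's proof in all essentials: eigenvalue bounds on $(\bI-\phi\bH)^{-2}$ from \eqref{ass1}, a bulk Friedrich-type inequality for coercivity, Lax--Milgram for part (i), and for part (ii) the observation that $\Gamma\in C^3$ gives $\phi\in C^3$, hence $C^1$ coefficients and a $C^3$ boundary, so standard $H^2$-regularity applies. The one place you and the paper diverge slightly is the Friedrich step: your primary route fibers $\Omega_d$ over $\Gamma$ and integrates \eqref{Fried} in $r$, while the paper does exactly what you list as the alternative --- it checks (via the Jacobian $\mu=\prod_i(1-\phi\kappa_i)\in[\tfrac14,\tfrac94]$) that $\{\alpha^e\ge\alpha_0\}$ has positive volume measure in $\Omega_d$ and then invokes a general Friedrich inequality on the bulk domain directly.
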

\begin{proof}
 First we  check that the bilinear form
 \[
 a(u,v):=\int_{\Omega_d}(\bI-\phi\bH)^{-2}\nabla u\cdot \nabla v + \alpha^e\,uv\,d\bx\]
 is continuous and coercive on $H^1(\Omega_d)$.

 The assumption \eqref{ass1} yields for the spectrum of the symmetric matrices:
 \[\mbox{sp}(\bI-\phi\bH)\in\left[\mbox{$\frac12,\frac32$}\right]~\Rightarrow~
 \mbox{sp}\left((\bI-\phi\bH)^{-2}\right)\in\left[\mbox{$\frac49$},4\right]\quad
 \text{for any}~\bx\in\Omega_d.\]
 Therefore, it holds
  \begin{align} \label{elp}
 \frac49\|\nabla u\|^2_{L^2(\Omega_d)}\le \int_{\Omega_d}(\bI-\phi\bH)^{-2}\nabla u\cdot \nabla u\,d\bx,
 \\ \int_{\Omega_d}(\bI-\phi\bH)^{-2}\nabla u\cdot \nabla v\,d\bx\le 4\|\nabla u\|_{L^2(\Omega_d)}\|\nabla v\|_{L^2(\Omega_d)}.
 \label{cont}
 \end{align}
 Estimates \eqref{cont} and $\|\alpha^e\|_{L^\infty(\Omega_d)}=\|\alpha\|_{L^\infty(\Gamma)}$ imply the continuity estimate
 \[
 \begin{aligned}
 a(u,v)&\le 4\|\nabla u\|_{L^2(\Omega_d)}\|\nabla v\|_{L^2(\Omega_d)}+\|\alpha^e\|_{L^\infty(\Omega_d)}\|u\|_{L^2(\Omega_d)}\|v\|_{L^2(\Omega_d)}\\
 &\le (4+\|\alpha\|_{L^\infty(\Gamma)})\|u\|_{H^1(\Omega_d)}\|v\|_{H^1(\Omega_d)}.
 \end{aligned}
 \]
Define
\[
 \mu(\bx):= \big(1-d(\bx) \kappa_1(\bx)\big)\big(1-d(\bx) \kappa_2(\bx)\big), \quad \bx \in \Omega_d .
\]
From (2.20), (2.23) in \cite{DD07} we have
$
 \mu(\bx) \rd \bx= \rd r \rd \bs(\bp(\bx))$, for $\bx \in \Omega_d$,
where $\rd \bx$ is the measure in $\Omega_d$, $\rd \bs$ the surface
measure on $\Gamma$, and $r$ the
local coordinate at $\bx \in \Gamma$ in the normal direction. Using \eqref{ass1},
we get
$
 \frac{1}{4} \leq \mu(\bx) \leq \frac{9}{4} \quad \text{for all} ~~\bx \in \Omega_d.
$
From this and relations \eqref{extension} and \eqref{cA}, we infer that $\alpha^e$  is strictly positive on a subset of $\Omega_d$ with positive measure:
\begin{equation*}
\widetilde{\mathcal{A}}:= \operatorname{meas}_{\bx}\{\bx\in\Omega_d\,:\, \alpha^e(\bx)\ge {\alpha}_0\}\ge
\frac89d\, \operatorname{meas}_{\bs}\{\bx\in\Gamma\,:\, \alpha(\bx)\ge {\alpha}_0\}>0,
\end{equation*}
with $\alpha_0>0$. Hence, similar to the surface case in~\eqref{Fried},  the Friedrich's type inequality
\begin{equation}\label{Fried1}
\|v\|_{L^2(\Omega_d)}^2 \le \widetilde{C}_F(\|\nabla v\|_{L^2(\Omega_d)}^2+\|\sqrt{\alpha} v\|_{L^2(\Omega_d)}^2)\quad
\forall~v\in H^1(\Omega_d)
\end{equation}
holds with a constant $\widetilde{C}_F$ dependent of $\alpha_0$ and $\widetilde{\mathcal{A}}$.

Inequalities \eqref{elp} and \eqref{Fried1} imply the ellipticity of the bilinear form:
$a(u,u)\ge c\|u\|^2$ for all $u\in H^1(\Omega_d)$, where the constant $c$ depends only on $\widetilde{C}_F$ from \eqref{Fried1}.
Therefore, part (i) of the theorem follows from the Lax-Milgram lemma.

To check part (ii) of the theorem, we note that $\Gamma\in C^3$ yields $\phi\in C^3$, see~\cite{DistFunc}, and $\dO_d\in C^3$. Therefore, the entries of the `diffusion' matrix $(\bI-\phi\bH)^{-2}$ are in $C^1$ and $\alpha\in L^\infty(\Gamma) \Rightarrow\alpha^e\in L^{\infty}(\Omega_d)$. This smoothness of the data is sufficient for the elliptic problem to be $H^2$-regular~\cite{ADN,Ladyzh} and the result follows.

\end{proof}

\begin{remark}\rm
 Theorem~\ref{Th1} shows one theoretical advantage of the new extended formulation  \eqref{ExtNew} over \eqref{1.1Ext} and \eqref{1.2}: If the data is smooth,  then  the Agmon-Douglis-Nirenberg regularity theory  immediately applies.  In particular, for $\Gamma\in C^3$, 
we have $u\in H^{2}(\Omega_d)$ and the trace theorem, see, e.g.,~\cite{Ladyzh}, yields $u|_\Gamma\in H^{1}(\Gamma)$.
This enables one to consider the trace of $u$ as the weak solution to \eqref{LBeq}.
\end{remark}


\section{Finite element method}\label{s_FEM}
Let $\Gamma\in C^2$ and fix a domain $\Omega_d$  such that the band width $d$ satisfies \eqref{ass1}. Assume $\T$ is a consistent
division (triangulation) of $\Omega_d$ into tetrahedra elements. We call a triangulation of $\Omega_d$   exact if $\bigcup_{T\in\T} \overline{T}=\overline{\Omega}_d$. Since $\dO_d$ coincides
with isolines of the distance function $\phi$, the boundary of $\Omega_d$ is curvilinear:  $\dO_d\in C^2$. Hence, exact triangulations of $\Omega_d$
may be constructed only in certain cases using isogeometric  elements~\cite{Isogeom} or mapped (blending) finite elements~\cite{blend}. In a general case,
we define the domain:
\[
\overline{\Omega}_h:=\bigcup_{T\in\T} \overline{T},
\]
which approximates $\Omega_d$.

Furthermore, in some applications the surface $\Gamma$ may not be known explicitly, but given only
approximately as, for example, the zero level set of a finite element distance function $\phi_h$.  In this case,
instead of the Hessian $\bH=\nabla^2 \phi$ one has to use a discrete Hessian $\bH_h\approx \bH$, which is  obtained from $\phi_h$ by any of discrete Hessian recovery methods, see e.g.~\cite{Hessian1,Hessian0}.
We assume that $\phi_h$ and $\bH_h$ satisfy condition~\eqref{ass1}.

Let $V_h\subset H^1(\Omega_h)$ be a space of finite element functions.
The finite element method reads: Find $u_h\in V_h$ satisfying
\begin{equation}\label{FEmeth}
\int_{\Omega_h}\big(\,(\bI-\phi_h\bH_h)^{-2}\nabla u_h\,\big)\cdot\nabla v_h + \alpha^e\,u_h v_h\,d\bx =\int_{\Omega_h} f^ev_h\,d\bx\quad\forall\,v_h\in V_h.
\end{equation}

We analyse the method \eqref{FEmeth} below in the special case of $\Omega_h=\Omega_d$, $\phi_h=\phi$, and $\bH_h=\bH$.
Numerical experiments in the next section test the method when non of these assumptions hold.

Since the diffusion tensor $(\bI-\phi\bH)^{-2}$ is uniform positive definite and bounded,  we immediately  obtain the following optimal convergence result, e.g.,~\cite{Braess}:
\begin{theorem}\label{Conv1} Let $\Omega_h=\Omega_d$, $\phi_h=\phi$, and $\bH_h=\bH$. Assume $u$ and $u_h$ solve
problems \eqref{ExtNew} and \eqref{FEmeth}, respectively. Then it holds
\[
\|u-u_h\|_{H^1(\Omega_d)} \le C \inf_{v_h\in V_h}\|u-v_h\|_{H^1(\Omega_d)},
\]
where the constant $C$ may depend only on $\alpha$, $\Gamma$ and $d$.
\end{theorem}

Theorem~\ref{Conv1} and the trace theorem yield the simple error estimate on the surface:
\begin{equation}\label{est_simple}
\|u-u_h\|_{L^2(\Gamma)} \le C\,\inf_{v_h\in V_h}\|u-v_h\|_{H^1(\Omega_d)}.
\end{equation}
However, the above estimate of the error in surface $L^2$ norm is not optimal and can be
improved. The improved estimate  is given by Theorem~\ref{ThMain}. To show it, we need several
preparatory results.

Denote \[h=\sup_{T\in\T}\mbox{diam}(T)\] and assume that $V_h$ is such that
\begin{equation}\label{approx}
\inf_{v_h\in V_h}\|v-v_h\|_{H^1(\Omega_d)}\le C_a\, h \|v\|_{H^2(\Omega_d)},\quad \forall\,v\in H^2(\Omega).
\end{equation}
The $L^2$-convergence estimate for the finite element method for the extended problem  is given in the next theorem.

\begin{theorem}\label{Conv2} Let $\Omega_h=\Omega_d$, $\phi_h=\phi$, $\bH_h=\bH$, and $\Gamma\in C^3$. Assume $u$ and $u_h$ solve
problems \eqref{ExtNew} and \eqref{FEmeth}, respectively. Then it holds
\[
\|u-u_h\|_{L^2(\Omega_d)} \le C\,h\,\inf_{v_h\in V_h}\|u-v_h\|_{H^1(\Omega_d)},
\]
where the constant $C$ may depend only on $\alpha$, $\Gamma$, $d$, and constant $C_a$ from \eqref{approx}.
\end{theorem}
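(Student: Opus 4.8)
The plan is to run a standard Aubin--Nitsche (duality) argument, adapted to the non-symmetric but uniformly elliptic diffusion tensor $\bA:=(\bI-\phi\bH)^{-2}$. Since $\bH$ is symmetric, $\bA$ is symmetric, so the bilinear form $a(\cdot,\cdot)$ from the proof of Theorem~\ref{Th1} is symmetric, which simplifies matters. First I would set $e:=u-u_h$ and introduce the dual problem: find $w\in H^1(\Omega_d)$ such that $a(v,w)=\int_{\Omega_d} e\,v\,\rd\bx$ for all $v\in H^1(\Omega_d)$, i.e.\ the weak form of $-\Div(\bA\nabla w)+\alpha^e w = e$ in $\Omega_d$ with the natural boundary condition $(\bn\cdot\nabla)w=0$ on $\dO_d$. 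By part (ii) of Theorem~\ref{Th1} (applicable because $\Gamma\in C^3$ gives $\bA\in C^1$ and $\dO_d\in C^3$), this dual problem is $H^2$-regular: $w\in H^2(\Omega_d)$ with $\|w\|_{H^2(\Omega_d)}\le C\|e\|_{L^2(\Omega_d)}$.

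Next I would invoke Galerkin orthogonality. Both \eqref{ExtNew} and \eqref{FEmeth} use the \emph{same} bilinear form $a$ (this is exactly the special case $\Omega_h=\Omega_d$, $\phi_h=\phi$, $\bH_h=\bH$), so $a(e,v_h)=0$ for all $v_h\in V_h$. Testing the dual problem with $v=e$ and subtracting an arbitrary $w_h\in V_h$,
\[
\|e\|_{L^2(\Omega_d)}^2 = a(e,w) = a(e,w-w_h) \le C_{\mathrm{cont}}\,\|e\|_{H^1(\Omega_d)}\,\|w-w_h\|_{H^1(\Omega_d)},
\]
using the continuity bound $a(u,v)\le(4+\|\alpha\|_{L^\infty(\Gamma)})\|u\|_{H^1}\|v\|_{H^1}$ established in the proof of Theorem~\ref{Th1}. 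Now take the infimum over $w_h\in V_h$ and apply the approximation hypothesis \eqref{approx} to $w$:
\[
\inf_{w_h\in V_h}\|w-w_h\|_{H^1(\Omega_d)}\le C_a\,h\,\|w\|_{H^2(\Omega_d)}\le C_a\,h\,C\,\|e\|_{L^2(\Omega_d)}.
\]
Combining these and cancelling one factor of $\|e\|_{L^2(\Omega_d)}$ gives $\|e\|_{L^2(\Omega_d)}\le C\,h\,\|e\|_{H^1(\Omega_d)}$.

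Finally, I would bound $\|e\|_{H^1(\Omega_d)}$ by Theorem~\ref{Conv1}, namely $\|e\|_{H^1(\Omega_d)}\le C\inf_{v_h\in V_h}\|u-v_h\|_{H^1(\Omega_d)}$, which yields the claimed estimate with a constant depending only on $\alpha$, $\Gamma$, $d$, and $C_a$. There is no serious obstacle here; the only point requiring care is making sure the duality argument is legitimate — i.e.\ that the adjoint problem is well-posed and $H^2$-regular with the natural boundary condition, and that the symmetry/self-adjointness of $a$ is used correctly (for symmetric $\bA$ the adjoint coincides with the original operator, so Theorem~\ref{Th1}(ii) applies verbatim with right-hand side $e\in L^2(\Omega_d)$). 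The mild hypothesis \eqref{approx}, stated for all $v\in H^2(\Omega)$, must be applied to $w\in H^2(\Omega_d)$; I would simply note that the natural setting is $H^2(\Omega_d)$ and that \eqref{approx} is understood accordingly (or that $w$ extends suitably), so this is a cosmetic rather than a substantive issue.
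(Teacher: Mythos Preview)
Your proposal is correct and is precisely the approach the paper takes: the paper's proof simply observes that $\Gamma\in C^3$ gives $H^2$-regularity (Theorem~\ref{Th1}(ii)) and that under the stated assumptions \eqref{FEmeth} is a plain Galerkin method, then invokes the standard duality (Aubin--Nitsche) argument from, e.g., Braess. You have merely spelled out that argument in detail, including the minor point that the $H^2$-regularity from Theorem~\ref{Th1}(ii) applies to a general $L^2$ right-hand side, not only to extensions $f^e$.
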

\begin{proof} The assumption $\Gamma\in C^3$ ensures that the differential problem~\eqref{ExtNew}
is $H^2$-regular.  Since $\Omega_h=\Omega_d$, $\phi_h=\phi$, $\bH_h=\bH$, the discrete
problem \eqref{FEmeth} is the plain Galerkin method. Hence, the result follows from the standard
duality argument, see, e.g., \cite{Braess}.
\end{proof}

The result below is found for example in Theorem 1.4.3.1 of \cite{Gr85} (note that the unit simplex has uniformly Lipschitz boundary).

\begin{lemma}
\label{lem2-5}
Let $\widehat{T}$ be the unit simplex (triangle or tetrahedra) in $\mathbb{R}^N$, $N=2,3$.  Then there exists an extension operator $E:H^1(\widehat{T}) \rightarrow H^1(\mathbb{R}^N)$ such that
\begin{equation}
\label{2.8}
\|E v\|_{H^1(\mathbb{R}^N)} \le C\, \|v\|_{H^1(\widehat{T})}\quad \forall~v \in H^1(\widehat{T}).
\end{equation}
\end{lemma}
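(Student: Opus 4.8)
The plan is to construct $Ev$ explicitly by the classical reflection (Babич/Stein) technique adapted to the unit simplex, taking advantage of the fact that $\widehat{T}$ is a Lipschitz domain with a particularly simple geometry. First I would reduce, via a partition of unity subordinate to a covering of $\overline{\widehat{T}}$ by small balls, to the local situation near a single boundary face, edge, or vertex. Away from the boundary there is nothing to do; near a point of a single face one rotates coordinates so the face lies in $\{x_N=0\}$ with $\widehat{T}$ locally equal to $\{x_N>0\}$, and then uses the standard reflection $x_N \mapsto -x_N$, or, since we only need $H^1$ regularity (one derivative), even the naive even extension $Ev(x',-x_N):=v(x',x_N)$ suffices: it produces an $H^1$ function across the face with $\|Ev\|_{H^1}\le C\|v\|_{H^1}$ by a direct change-of-variables computation on each half.

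The genuinely delicate points are the edges and vertices, where two or three faces meet and the single-face reflections are mutually incompatible. The cleanest route here is to invoke the general Stein extension theorem for bounded Lipschitz domains (Theorem of Stein; see also \cite{Gr85}, Theorem 1.4.3.1, which is exactly the reference cited in the statement), whose hypothesis — uniformly Lipschitz boundary — is satisfied by $\widehat{T}$ since it is a bounded convex polytope, hence locally the epigraph of a Lipschitz function in suitable coordinates near every boundary point. That theorem directly yields a bounded linear $E:H^1(\widehat{T})\to H^1(\R^N)$ (in fact $E:H^k\to H^k$ for all $k$) with operator norm depending only on the Lipschitz character of $\widehat{T}$, which for the fixed unit simplex is a fixed constant $C=C(N)$. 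This gives \eqref{2.8} immediately.

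If one prefers a self-contained argument rather than quoting Stein, the fallback is: near an edge (in $\R^3$) or a vertex, straighten the corner by a bi-Lipschitz change of variables to a model cone or wedge, extend across one face first and then across the adjacent face of the already-extended function, checking at each stage that the function remains $H^1$ (the key is that the trace on the shared lower-dimensional face matches from both sides, which holds because iterated even reflections commute on the overlap). One then patches the local extensions together with the partition of unity; multiplication by smooth cutoffs is bounded on $H^1(\R^N)$, and the finite sum of local pieces is controlled by $\sum_i \|v\|_{H^1(\widehat{T}\cap B_i)} \le C\|v\|_{H^1(\widehat{T})}$.

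The main obstacle is purely the corner/edge bookkeeping — making the reflections across adjacent faces compatible so that the patched function has no spurious jump and genuinely lies in $H^1(\R^N)$ — and for that reason invoking the Stein extension theorem for Lipschitz domains (as the paper does) is by far the most economical option; the whole proof then reduces to the observation that the unit simplex has Lipschitz boundary, which is noted parenthetically in the statement. I would therefore write the proof as a one-line citation of \cite{Gr85} together with that observation, and relegate the reflection construction to a remark only if a reader wants the constant made explicit.
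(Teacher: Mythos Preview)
Your proposal is correct and matches the paper's approach exactly: the paper gives no argument beyond the parenthetical observation that the unit simplex has uniformly Lipschitz boundary and a citation of Theorem~1.4.3.1 in \cite{Gr85}. Your additional reflection sketch is extra detail the paper does not include, but the core route is identical.
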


For a tetrahedron (triangle) $T$ denote by $\rho(T)$  the diameter of the inscribed ball. Let $\T_\Gamma$  be the set
of all tetrahedra intersected by $\Gamma$. Denote
\begin{equation}\label{beta}
\beta=\sup_{T\in\T_\Gamma}\mbox{diam}(T)/\rho(T).
\end{equation}
We assume that tetrahedra (triangles) in $\T_\Gamma$ are shape-regular, i.e., $\beta$ is not too big.
We need the following technical lemma.

\begin{lemma}\label{L_trace}
Let $T\in \T_\Gamma$. Denote $h=\mbox{diam}(T)$ and $\widetilde{K}=T\cap\Gamma$, then it holds
\begin{equation}\label{eq_trace}
\|v\|_{L^2(\widetilde{K})}\le C\,(h^{-\frac12}\|v\|_{L^2(T)}+h^{\frac12}\|\nabla v\|_{L^2(T)}),
\end{equation}
where the constant $C$ may depend only on $\Gamma$ and the constant $\beta$ from \eqref{beta}.
\end{lemma}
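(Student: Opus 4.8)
The plan is to reduce the estimate to a scaling argument on a reference configuration using Lemma~\ref{lem2-5}. First I would introduce the affine map $F_T:\widehat{T}\to T$ from the unit simplex onto $T$, so that $F_T(\widehat{\bx})=B_T\widehat{\bx}+b_T$. By shape-regularity of $\T_\Gamma$, the matrix $B_T$ satisfies $\|B_T\|\le C h$ and $\|B_T^{-1}\|\le C h^{-1}$, with $C$ depending only on $\beta$; correspondingly $|\det B_T|\simeq h^N$. The preimage $\widehat{K}:=F_T^{-1}(\widetilde{K})$ is a piece of a $C^2$ hypersurface inside $\widehat T$ whose curvature is controlled: since the second fundamental form of $\Gamma$ is bounded (in terms of $\Gamma$) and the pullback under $F_T$ multiplies lengths by factors comparable to $h$, the principal curvatures of $\widehat K$ are bounded by $C h$, and in particular the $(N-1)$-dimensional measure of $\widehat K$ is bounded by a constant depending only on $\Gamma$ and $\beta$ (it is a bounded-curvature hypersurface patch sitting inside a set of diameter $O(1)$).

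Next I would establish the reference trace inequality: for any $\widehat v\in H^1(\widehat T)$ and any such admissible hypersurface patch $\widehat K\subset\widehat T$,
\[
\|\widehat v\|_{L^2(\widehat K)}\le C\,\|\widehat v\|_{H^1(\widehat T)},
\]
with $C$ depending only on $N$, $\Gamma$ and $\beta$. This follows by applying the extension operator $E$ of Lemma~\ref{lem2-5} to get $E\widehat v\in H^1(\mathbb{R}^N)$ with $\|E\widehat v\|_{H^1(\mathbb{R}^N)}\le C\|\widehat v\|_{H^1(\widehat T)}$, and then invoking the standard trace theorem for $H^1(\mathbb{R}^N)$ restricted to a compact $C^2$ hypersurface of controlled geometry — the trace constant depends only on the surface's diameter and curvature bounds, both of which are uniform over $T\in\T_\Gamma$ by the discussion above. (Alternatively one can prove this directly by writing the patch locally as a graph and using the one-dimensional trace estimate $\,|w(0)|^2\le C(\|w\|_{L^2(0,1)}^2+\|w\|_{L^2(0,1)}\|w'\|_{L^2(0,1)})$ fiberwise, but routing through Lemma~\ref{lem2-5} is cleanest.)

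Finally I would transport the estimate back to $T$ by the change of variables $\bx=F_T(\widehat\bx)$. Setting $\widehat v=v\circ F_T$, one has $\|v\|_{L^2(\widetilde K)}^2=\int_{\widehat K}|\widehat v|^2\,J_S\,\rd\widehat s$ where the surface Jacobian $J_S$ of $F_T|_{\widehat K}$ is bounded by $\|B_T\|^{N-1}\le C h^{N-1}$; hence $\|v\|_{L^2(\widetilde K)}^2\le C h^{N-1}\|\widehat v\|_{L^2(\widehat K)}^2$. Applying the reference trace inequality gives $\|v\|_{L^2(\widetilde K)}^2\le C h^{N-1}\big(\|\widehat v\|_{L^2(\widehat T)}^2+\|\widehat\nabla\widehat v\|_{L^2(\widehat T)}^2\big)$. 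Undoing the scaling on the right, $\|\widehat v\|_{L^2(\widehat T)}^2=|\det B_T|^{-1}\|v\|_{L^2(T)}^2\le C h^{-N}\|v\|_{L^2(T)}^2$ and $\|\widehat\nabla\widehat v\|_{L^2(\widehat T)}^2\le \|B_T\|^2|\det B_T|^{-1}\|\nabla v\|_{L^2(T)}^2\le C h^{2-N}\|\nabla v\|_{L^2(T)}^2$. Combining, $\|v\|_{L^2(\widetilde K)}^2\le C\big(h^{-1}\|v\|_{L^2(T)}^2+h\,\|\nabla v\|_{L^2(T)}^2\big)$, which is \eqref{eq_trace} after taking square roots.

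The main obstacle is the geometric uniformity claim in the middle step: one must be sure that the trace constant for $\widehat K\subset\widehat T$ does not degenerate as $T$ varies over $\T_\Gamma$. This is where $\Gamma\in C^2$ (bounded second fundamental form) together with shape-regularity (so that $F_T^{-1}$ does not distort curvature by more than an $O(h)$ factor, keeping $\widehat K$'s curvature bounded uniformly) is essential; without a curvature bound, the patch $\widehat K$ could wiggle arbitrarily inside $\widehat T$ and the trace estimate would fail to be uniform. Everything else is routine affine scaling.
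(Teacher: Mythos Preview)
Your argument is correct and follows the same skeleton as the paper's: affine pullback to the reference simplex, extension by Lemma~\ref{lem2-5}, a trace estimate in the reference configuration, and scaling back. The one substantive difference is how the uniform reference trace bound $\|\widehat v\|_{L^2(\widehat K)}\le C\|\widehat v\|_{H^1(\widehat T)}$ is obtained. You control the curvature of the pulled-back patch $\widehat K$ (it is $O(h)$, hence uniformly bounded) and then invoke a trace theorem whose constant depends only on the diameter and curvature of the patch; this is correct, and you rightly flag it as the step requiring care. The paper instead uses an explicit \emph{flattening} diffeomorphism: a $C^2$ chart $\widetilde\Phi$ of $\Gamma$ with uniformly bounded derivatives is conjugated by the affine map to produce $\Phi=\varphi^{-1}\circ\widetilde\Phi\circ\varphi$, which is uniformly $C^2$ (the $h$ and $h^{-1}$ factors cancel because $\varphi$ is affine) and sends $\widehat K$ into a fixed hyperplane $\mathbb P$; the trace then reduces to the universal inequality $\|w\|_{L^2(\mathbb P)}\le c\,\|w\|_{H^1(\mathbb R^N)}$. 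Your route is a little more black-box, relying on the folklore that trace constants are uniform over families of surfaces with bounded geometry; the paper's flattening makes this uniformity completely explicit and avoids quoting such a result.
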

\begin{proof} The proof adopts the `flattening' argument from~\cite{DO12}, \S~3.4. The proof below is given for
the three-dimensional case: $\Gamma$ is a surface  in $\mathbb{R}^3$. All arguments remain valid with obvious modifications, if $\Gamma$ in a curve in $\mathbb{R}^2$.
We may assume that the curvilinear element $\widetilde{K}$ has non-zero 2D measure.
 Let $\widehat{T}$ be the reference unit tetrahedron in $\mathbb{R}^3$, let $\varphi:\,\widehat{T}\to T$ be an affine mapping with $\|\nabla \varphi\| \le c\, h $ and $\|(\nabla \varphi)^{-1}\| \le c\,h ^{-1}$. Here and in the rest of the proof, $c$ denotes a generic constant, which may depend only on $\beta$ and $\Gamma$, but does not depend on $T$.  We next recall that because $\Gamma$ is a $C^2$ surface, there exists a $C^2$ chart $\widetilde{\Phi}$ with uniformly bounded derivatives, and for which $\widetilde{\Phi}^{-1}$ has uniformly bounded derivatives, which maps an $O(1)$-neighborhood $N$ of $\widetilde{K}$ in $\mathbb{R}^3$ to $\mathbb{R}^3$ and which has the property that $\Gamma \cap N$ lies in a plane.  It is not difficult to extend $\widetilde{\Phi}$ to all of $\mathbb{R}^3$ so that the resulting extension has bounded derivatives, has a bounded inverse, and flattens an $O(1)$-neighborhood of $\widetilde{K}$.  We then define a corresponding flattening map for the reference space by $\Phi= \varphi^{-1} \circ \widetilde{\Phi} \circ \varphi$.  It is easy to check that then $\Phi$ and $\Phi^{-1}$ are also uniformly bounded in $C^2$, and $\Phi(\varphi^{-1}(\widetilde{K}))$ is flat.
Denote by $\mathbb{P}$  a plane in $\mathbb{R}^3$ containing the flattened surface element $\Phi ( \varphi^{-1} (\widetilde{K}))$.

We need the following trace inequality (\cite{Ad75}, Theorem 7.58):
\begin{equation}\label{trace}
\|v\|_{L^2(\mathbb{P})}\le \|v\|_{H^{\frac12}(\mathbb{P})}\le c\, \|v\|_{H^{1}(\mathbb{R}^3)}\quad
\forall~v\in H^{1}(\mathbb{R}^3).
\end{equation}

Define $\widehat{ v}$ on $\widehat{T}$ by $\widehat{ v}= v \circ \varphi$.  Given $T \in \T$, recalling the definition of the extension operator $E$ from Lemma \ref{lem2-5} and trace inequality~\eqref{trace}, we then compute
\begin{equation}
\label{3.6}
\begin{aligned}
h ^{-1} \| v \|_{L^2(\widetilde{K})} \le & c\,\| \widehat{  v} \|_{L^2(\varphi^{-1}(\widetilde{K}))}
\\  = &  c\,\| E \widehat { v} \|_{L^2(\varphi^{-1} (\widetilde{K}))}
\\  \le &  c\,\| E \widehat { v} \circ \Phi^{-1}  \|_{L^2(\Phi ( \varphi^{-1} (\widetilde{K})))}
\\ \le &  c\,\|E \widehat { v}\circ \Phi^{-1}  \|_{L^2(\mathbb{P})}.
 \\ \le & c\, \|E \widehat { v}\circ \Phi^{-1}  \|_{H^{1} (\mathbb{R}^3)}
 \\ \le & c\, \|E \widehat{ v}\|_{H^{1}(\mathbb{R}^3)}
 \\ \le & c\, \|\widehat{ v}\|_{H^{1}(\widehat{T})}.
 \end{aligned}
 \end{equation}
 Applying a scaling argument  yields
 \begin{equation}
 \label{3.8}
 \|\widehat{ v} \|_{H^{1}(\widehat{T})}
\le c(h_{T}^{-3/2} \|\widehat{ v} \|_{L^2(T)} + h^{-1/2} \|\nabla \widehat{ v} \|_{L^2(T)}).
\end{equation}
Estimates \eqref{3.6} and \eqref{3.8} prove the lemma.
\end{proof}

Summing up the estimate \eqref{eq_trace} over all elements from $\T_\Gamma$, we get for $v\in H^1(\Omega_d)$
 \begin{equation}
 \label{aux1}
\begin{aligned}
\|v\|_{L^2(\Gamma)}^2&\le C\,(h^{-1}_{\min}\sum_{T\in\T_\Gamma}\|v\|_{L^2(T)}^2+h_{\max}\sum_{T\in\T_\Gamma}\|\nabla v\|_{L^2(T)}^2)\\
&\le C\,(h^{-1}_{\min}\|v\|_{L^2(\Omega_d)}^2+h_{\max}\|\nabla v\|_{L^2(\Omega_d)}^2),
\end{aligned}
\end{equation}
with $h_{\min(\max)}=\min(\max)_{T\in\T_\Gamma}\mbox{diam}(T)$. For the next theorem, let us assume
$h_{\max}\le c\,h_{\min}$.

\begin{theorem}\label{ThMain} Let $\Omega_h=\Omega_d$, $\phi_h=\phi$, $\bH_h=\bH$, and $\Gamma\in C^3$. Assume $u$ and $u_h$ solve
problems \eqref{LBeq} and \eqref{FEmeth}, respectively. Then it holds
\[
\|u-u_h\|_{L^2(\Gamma)} \le C\,h^{\frac12}\inf_{v_h\in V_h}\|u-v_h\|_{H^1(\Omega_d)},
\]
where the constant $C$ may depend only on $\alpha$, $\Gamma$, $d$,  constants $C_a$ from \eqref{approx}, and $\beta$ from \eqref{beta}.
\end{theorem}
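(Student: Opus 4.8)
The plan is to combine the bulk estimates already available --- the simple surface bound \eqref{est_simple}, the $L^2(\Omega_d)$-superconvergence of Theorem~\ref{Conv2}, and the trace-type inequality \eqref{aux1} --- interpolating between them to extract the extra half power of $h$. First I would note that the solution $u$ to the extended problem \eqref{ExtNew}, restricted to $\Gamma$, coincides with the solution of the surface problem \eqref{LBeq}, by the discussion in section~\ref{s_form}; so $\|u-u_h\|_{L^2(\Gamma)}$ really is the surface error we want. Then I apply \eqref{aux1} to the function $v=u-u_h\in H^1(\Omega_d)$, using $h_{\max}\le c\,h_{\min}$ so that both weights are $h^{-1}$ and $h$ respectively:
\[
\|u-u_h\|_{L^2(\Gamma)}^2\le C\bigl(h^{-1}\|u-u_h\|_{L^2(\Omega_d)}^2+h\,\|\nabla(u-u_h)\|_{L^2(\Omega_d)}^2\bigr).
\]

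Now I would bound the two bulk terms separately. The gradient term is controlled by $\|u-u_h\|_{H^1(\Omega_d)}^2\le C\,(\inf_{v_h}\|u-v_h\|_{H^1(\Omega_d)})^2$ via Theorem~\ref{Conv1} (Cea's lemma). The $L^2(\Omega_d)$-term is exactly where the improvement comes from: Theorem~\ref{Conv2} gives $\|u-u_h\|_{L^2(\Omega_d)}\le C\,h\,\inf_{v_h}\|u-v_h\|_{H^1(\Omega_d)}$, so $h^{-1}\|u-u_h\|_{L^2(\Omega_d)}^2\le C\,h\,(\inf_{v_h}\|u-v_h\|_{H^1(\Omega_d)})^2$. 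Substituting both into the displayed inequality yields
\[
\|u-u_h\|_{L^2(\Gamma)}^2\le C\,h\,\Bigl(\inf_{v_h\in V_h}\|u-v_h\|_{H^1(\Omega_d)}\Bigr)^2,
\]
and taking square roots gives the claimed $h^{1/2}$ estimate, with the constant depending only on $\alpha$, $\Gamma$, $d$, $C_a$ and $\beta$ as required. The dependence on $C_a$ enters because Theorem~\ref{Conv2} tacitly relies on the approximation property \eqref{approx} to realize the duality argument.

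I do not expect a serious obstacle here --- the theorem is essentially an exercise in assembling the machinery built in the preceding lemmas. The one place where some care is needed is checking that Theorem~\ref{Conv2} genuinely applies: it requires $\Gamma\in C^3$ (so that \eqref{ExtNew} is $H^2$-regular, via Theorem~\ref{Th1}(ii)) and the approximation hypothesis \eqref{approx}, both of which are in force under the stated assumptions. A second minor point is the passage from the elementwise trace bound \eqref{eq_trace} to the global bound \eqref{aux1}, where one sums over $T\in\T_\Gamma$ and uses that each element of $\Omega_d$ appears at most once; this is already recorded in the excerpt, so I would simply cite it. If anything, the ``hardest'' conceptual step is recognizing that one should not try to prove an $h^{1/2}$ estimate directly on $\Gamma$ but rather trade the full $O(h)$ bulk $L^2$-gain for half of it through the trace inequality, which loses $h^{-1/2}$; the net effect is the advertised $h^{1/2}$.
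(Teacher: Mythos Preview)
Your proof is correct and follows exactly the paper's own approach: the paper's proof consists of a single sentence stating that the result follows from Theorems~\ref{Conv1}, \ref{Conv2} and \eqref{aux1}, and you have spelled out precisely how these three ingredients combine via the trace inequality applied to $u-u_h$.
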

\begin{proof}
The result of the theorem follows from Theorems~\ref{Conv1},~\ref{Conv2} and \eqref{aux1}.
\end{proof}
\medskip

The error estimate from Theorem~\ref{ThMain} is an improvement of  \eqref{est_simple}, but still half-order suboptimal. The difficulty in proving the optimal estimate is that the analysis of the extended finite element problem in $H^1(\Omega_d)$ and $L^2(\Omega_d)$ norms gives little information of normal derivatives of the error, i.e. how accurate $u_h$ satisfies \eqref{const}.

Approaching optimal estimates of the error on $\Gamma$ is possible
applying the well-known results on interior maximum-norm estimates for finite element methods for elliptic problems~\cite{Linfty}.
This, however, requires some further restrictions on mesh and $V_h$.
To be precise, assume a quasi-uniform triangulation of $\Omega_h$ and let $\mbox{dist}(\partial\T_\Gamma,\partial\Omega_h)\ge c_1 d\ge c_0 h$, with $c_1$ independent of $h$ and
large enough constant $c_0$.
 Let $V_h$ be the space of $P_k$ finite element functions, $k\ge1$.
Now we apply Theorem~1.1 from \cite{Linfty} (in terms of  \cite{Linfty} we take the `basic' domain $\Omega_0=\T_\Gamma$ and the intermediate domain $\Omega_d=\Omega_h$). We  obtain
\begin{equation}\label{max_est1}
\begin{aligned}
\|u&-u_h\|_{L^{\infty}(\Gamma)}\le \|u-u_h\|_{L^{\infty}(\T_\Gamma)}\\
&\le C\left((\ln dh^{-1})^r \min_{v_h\in V_h}\|u-v_h\|_{L^{\infty}(\Omega_h)}
+ d^{-\frac{N}2}\|u-u_h\|_{L^2(\Omega_h)}\right)
\end{aligned}
\end{equation}
 for $u$ and $u_h$ solving \eqref{ExtNew} and \eqref{FEmeth}, respectively. Here $r=1$ for $k=1$ and $r=0$ for $k\ge2$.

Now we want to combine the result in \eqref{max_est1} with $L^2$ volume estimate from Theorem~\ref{Conv2}.
To do this, we have to assume $\Omega_h=\Omega_d$, which means curvilinear elements on the boundary of $\Omega_h$.
Although certain types of curvilinear elements are allowed by the analysis of   \cite{Linfty},
we avoid further assumptions on elements touching boundary, but simply separate from the boundary:
Consider $\Omega_h'=\{T\in\T~:~\overline{T}\cap\dO_d=\emptyset\}$. Assume $\Omega_h'$ consists only of shape-regular tetrahedra (triangles). 
The restriction of finite  element functions from $V_h$ on $\Omega_h'$ is denoted by $V_h(\Omega_h')$. We assume $V_h(\Omega_h')$ is the space of $P^k$ elements. If $d$ is fixed and  $h$ is sufficiently small,
then $\mbox{dist}(\partial\T_\Gamma,\partial\Omega_h')\ge c_1 d\ge c_0 h$, with $c_1$ independent of $h$ and large enough constant $c_0$. Hence, the result in \eqref{max_est1} holds with $\Omega_h$ replaced by $\Omega_h'$ and we can combine it with the estimate from Theorem~\ref{Conv2}.
 Thus, we  proved the following theorem.

 \begin{theorem}\label{ThMain2} Let $\Omega_h=\Omega_d$, $\phi_h=\phi$, $\bH_h=\bH$,  $\Gamma\in C^3$, $d$ is fixed such that  \eqref{ass1} holds, $h$ is sufficiently small, the triangulation of $\Omega_h'$ is quasi-uniform and consists of tetrahedra (triangles), and $V_h(\Omega_h')$ is  the space of $P^k$ elements, $k\ge1$. Assume $u$ and $u_h$ solve
problems \eqref{LBeq} and \eqref{FEmeth}, respectively. Then it holds
\[
\|u-u_h\|_{L^{\infty}(\Gamma)}\le C\left((\ln h^{-1})^r \min_{v_h\in V_h}\|u-v_h\|_{L^{\infty}(\Omega_h)}
+ h\,\inf_{v_h\in V_h}\|u-v_h\|_{H^1(\Omega_d)}\right),
\]
$r=1$ for $k=1$ and $r=0$ for $k\ge2$. The constant $C$ may depend only on $\alpha$, $\Gamma$, $d$,  constants $C_a$
from \eqref{approx}, and $\beta$ from \eqref{beta}.
\end{theorem}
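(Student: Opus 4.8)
The plan is to assemble Theorem~\ref{ThMain2} directly from two ingredients already prepared in the text: the interior maximum-norm estimate \eqref{max_est1} and the $L^2(\Omega_d)$-convergence bound of Theorem~\ref{Conv2}. The only genuine work is to make sure these two results can be invoked simultaneously in a common setting, which is why one separates the boundary layer of elements. First I would fix $d$ satisfying \eqref{ass1} and introduce $\Omega_h'=\{T\in\T~:~\overline{T}\cap\dO_d=\emptyset\}$, observing that under the quasi-uniformity hypothesis and for $h$ small enough one has $\mbox{dist}(\partial\T_\Gamma,\partial\Omega_h')\ge c_1 d\ge c_0 h$ with $c_0$ as large as required in \cite{Linfty}; this is where the assumption that $\Omega_h'$ is a shape-regular simplicial mesh carrying $P^k$ elements enters, so that the hypotheses of Theorem~1.1 of \cite{Linfty} are literally met on $\Omega_h'$ with basic domain $\Omega_0=\T_\Gamma$.

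Next I would apply that interior estimate with $\Omega_h$ replaced by $\Omega_h'$, giving
\[
\|u-u_h\|_{L^{\infty}(\Gamma)}\le \|u-u_h\|_{L^{\infty}(\T_\Gamma)}\le C\left((\ln h^{-1})^r \min_{v_h\in V_h}\|u-v_h\|_{L^{\infty}(\Omega_h)} + d^{-\frac N2}\|u-u_h\|_{L^2(\Omega_h')}\right),
\]
with $r=1$ for $k=1$ and $r=0$ otherwise, using that $\Gamma\subset\T_\Gamma\subset\Omega_h'$. Since $d$ is fixed, the factor $d^{-N/2}$ is absorbed into $C$, and $\ln(dh^{-1})$ and $\ln h^{-1}$ differ only by an additive constant, so the logarithmic factor can be written as $(\ln h^{-1})^r$. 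It remains to bound $\|u-u_h\|_{L^2(\Omega_h')}\le\|u-u_h\|_{L^2(\Omega_d)}$ and invoke Theorem~\ref{Conv2}, which under $\Omega_h=\Omega_d$, $\phi_h=\phi$, $\bH_h=\bH$, $\Gamma\in C^3$ gives $\|u-u_h\|_{L^2(\Omega_d)}\le C\,h\,\inf_{v_h\in V_h}\|u-v_h\|_{H^1(\Omega_d)}$; substituting this and collecting constants yields exactly the claimed bound.

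The main obstacle is not analytic but bookkeeping: verifying that the geometric separation hypotheses of \cite{Linfty} really do hold after stripping the curvilinear boundary layer, and that the constants produced depend only on the quantities listed ($\alpha$, $\Gamma$, $d$, $C_a$, $\beta$) and not, e.g., on $h$ through the width of $\Omega_h'$. Here one uses that $\Omega_h=\Omega_d$ so $\partial\Omega_h$ is the level set $\dO_d$, and the removed layer has width $O(h)$, hence $\Omega_h'$ still contains a fixed-width neighborhood of $\T_\Gamma$ once $h$ is small relative to the fixed $d$; this is precisely the sentence in the text asserting $\mbox{dist}(\partial\T_\Gamma,\partial\Omega_h')\ge c_1 d\ge c_0 h$. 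Once that is granted, the combination of \eqref{max_est1} and Theorem~\ref{Conv2} is routine, and the proof reduces to the single line already given in the excerpt.
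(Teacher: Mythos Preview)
Your proposal is correct and follows essentially the same route as the paper: apply the interior maximum-norm estimate \eqref{max_est1} on the subdomain $\Omega_h'$ (so that the hypotheses of \cite{Linfty} hold on a purely simplicial $P^k$ mesh), then control the residual $L^2$ term via Theorem~\ref{Conv2}, absorbing the $d^{-N/2}$ factor and the additive constant in $\ln(dh^{-1})$ into $C$ since $d$ is fixed. The only additional care you take---making explicit the inclusion $\|u-u_h\|_{L^2(\Omega_h')}\le\|u-u_h\|_{L^2(\Omega_d)}$ and the dependence of the constants---is implicit in the paper's argument.
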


As an example, assume $u$ is sufficiently smooth and $V_h$ is piecewise linear finite element space (mapped piecewise linear near $\dO_d$). Then Theorem~\ref{ThMain2} yields
optimal order convergence result (up to logarithmic term): $\|u-u_h\|_{L^{\infty}(\Gamma)}=O(h^2\ln h^{-1})$.

\section{Numerical examples}\label{s_numer}
In this section, we present results of several numerical experiments.
We start with the example of the  Laplace--Beltrami problem \eqref{LBeq}
on a unit circle in $\mathbb{R}^2$ with a known solution so that we are able to calculate
the error between the continuous and discrete solutions. We set $\alpha=1$ and consider
\[
u(r,\phi)=\cos(5\phi)
\]
in polar coordinates, similar to the Example~5.1 from~\cite{DDEH}.

For $d=0.05$, we build conforming quasi-uniform triangulation of $\Omega_d$ and  apply the regular refinement process.
The grid is always aligned with the boundary of $\Omega_d$ so that $\dO_h$ is an $O(h^2)$
approximation of $\dO_d$. The grid after one step of refinement in the upper right part of $\dO_h$  is shown
in Figure~\ref{fig0} (left).

\begin{figure}
\begin{center}
\includegraphics[scale=.17]{./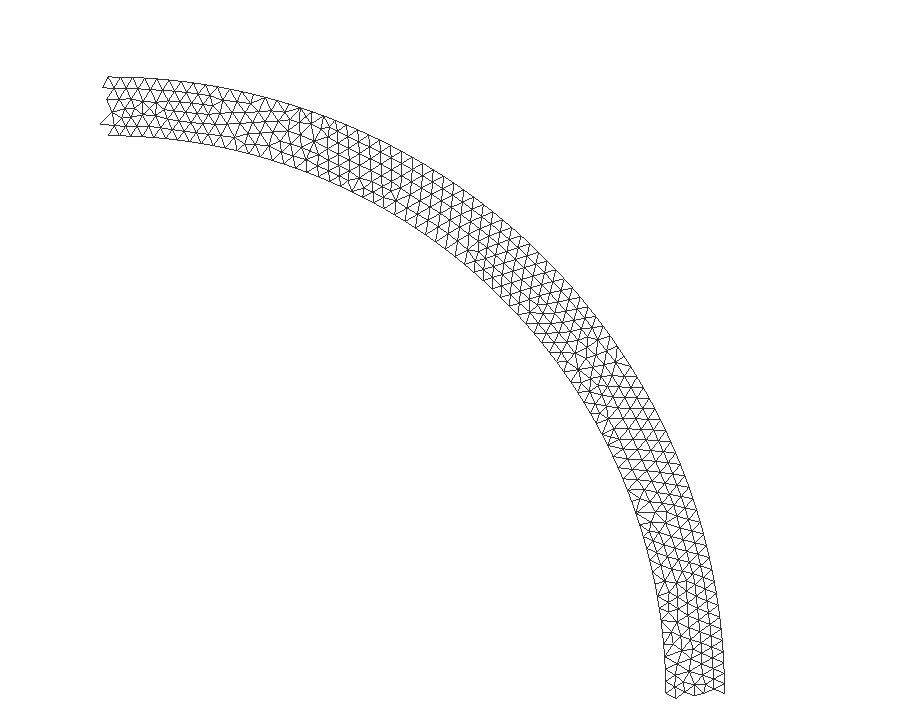}\qquad
\includegraphics[scale=.17]{./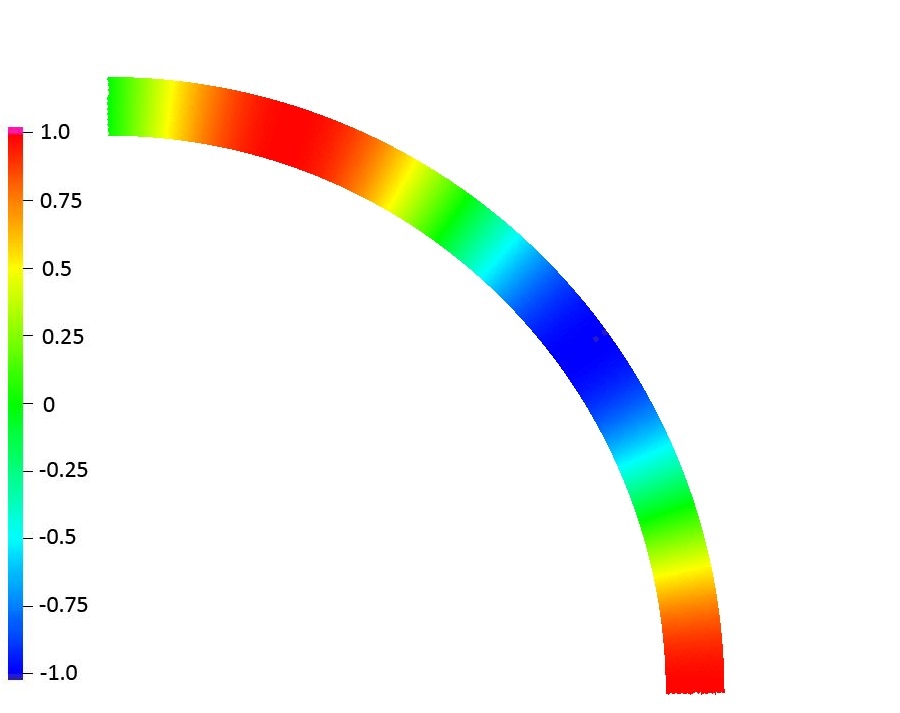}
\end{center}
\caption{The triangulation of $\Omega_d$ after one step of refinement for the 2D example
and the visualization of the computed  solution of the extended problem. \label{fig0}}
\end{figure}

The solution computed on grid level 3  is shown in Figure~\ref{fig0} (right). The visualized solution looks
constant in normal direction, as expected for solution of the extended problem. Next, we compute
the finite element error  restricted to the surface. For evaluating this error, we consider piecewise linear
approximations of $\Gamma$ and evaluate the errors along these piecewise linear surfaces.
To assess the estimates given by Theorems~\ref{ThMain} and~\ref{ThMain2},
we show in Table~\ref{tab0} the surface $L^2$ and $C$-norms of the errors. We clearly see the second order of
convergence in both norms, when the Hessian of the distance function is taken in \eqref{FEmeth} exactly.
We also experiment with approximate choice of $\phi_h$ and $\bH_h$ in \eqref{FEmeth}.
In this case, $\phi_h$ is a piecewise linear Lagrange interpolant to $\phi$, and $\bH_h$ is a piecewise linear continuous tensor-function recovered from $\phi_h$ by the variation method~\cite{Hessian2}. Compared to the exact
choice, the finite element errors are somewhat larger, although the convergence rates stay close to the second order.   The discrete problems were solved using the BCG method with the ILU2
preconditioner~\cite{Kaporin}  to a relative tolerance of $10^{-9}$. The iterations numbers are shown
in the right column of the table.

\begin{table}
\center
\caption{Norms of the errors  for the example of sphere with exact and approximate Hessian. $\Omega_d$ is fixed with $d=0.1$. \# Iter is the number of preconditioned BCG iterations.
} \label{tab0}
\footnotesize
\begin{tabular}{l|cr|rrrrr}\hline
& $h$  &\#d.o.f.  &$L^2$-norm & Order &$C$-norm & Order &\# Iter. \\ \hline\\[-2ex]
     & 0.0417	&610	 &0.318E-02	&       &0.345E-02&	&13\\
     & 0.0208	&2058    &0.662E-03&2.26	&0.148E-02&	1.22&28\\
$\bH$& 0.0104	&7351	 &0.179E-03&1.89    &0.308E-03&	2.26&60\\
     & 0.0052	&27954   &0.409E-04&2.13    &0.812E-04&	1.92&142\\
     & 0.0026	&109576  &0.983E-05&2.06	&0.195E-04&	2.06&325 \\
\hline \\[-2ex]
       & 0.0417	&610	&0.449E-02	&&0.451E-02&	&13\\
       & 0.0208	&2058&	0.147E-02	&1.61&0.182E-02&1.31	&28\\
$\bH_h$& 0.0104	&7351	&0.390E-03	&1.91&0.420E-03&2.16	&63\\
       & 0.0052	&27954&	0.124E-03	&1.65 &0.160E-03&1.39	&137\\
       & 0.0026	&109576&	0.325E-04&1.93	&0.425E-04&1.91	&297\\
\hline
\end{tabular}
\end{table}

As the next test problem,
 we consider the Laplace--Beltrami equation on the unit sphere:
\[
-\Delta_\Gamma u +u =f\quad\mbox{on}~\Gamma,
\]
with $\Gamma=\{ \bx\in\R^3\mid \|\bx\|_2 = 1\}$.
The source term $f$ is taken such that the solution is given by
\[
    u(\bx)= \frac{12}{\|\bx\|^3}\left(3x_1^2x_2 - x_2^3\right),\quad
    \bx=(x_1,x_2,x_3)\in\Omega.
\]
Note that $u$ and $f$ are constant along normals at $\Gamma$.

For different values of the domain width parameter $d$, we build conforming subdivisions of
$\Omega_d$ into regular-shaped tetrahedra using the  software package ANI3D \cite{ANI3D}.
The grid is aligned with the boundary of $\Omega_d$ so that $\dO_h$ is an $O(h^2)$
approximation of $\dO_d$.
The resulting discrete problems are again solved by the  BCG method with the ILU2
preconditioner  to a relative tolerance of $10^{-9}$.

\begin{table}
\center
\caption{Norms of the errors  for the example of sphere with exact and approximate Hessian. $\Omega_d$ is fixed with $d=0.1$. 
} \label{tab1}
\footnotesize
\begin{tabular}{l|r|rrrrr}\hline
&\#d.o.f. & $L^2$-norm & Order &$C$-norm & Order &\# Iter. \\ \hline\\[-2ex]
     &1026&	 0.6085E-01&     &	0.9033E-01&      &9	\\
$\bH$&8547&	 0.1503E-01&1.98 &	0.1523E-01& 2.52 &23  \\
     &63632&	 0.3990E-02&1.98 &	0.3971E-02& 2.01 &47  \\
\hline \\[-2ex]
       &1026&	0.8095E-01&	   &0.1032E+00&     &9	\\
$\bH_h$&8547&	0.2144E-01&1.88 &0.1909E-01&2.39&23  \\
       &63632& 0.5114E-02&2.14 &0.4529E-02&2.15 &46  \\
\hline
\end{tabular}
\end{table}

\begin{figure}
\begin{center}
\hfill\includegraphics[scale=.19]{./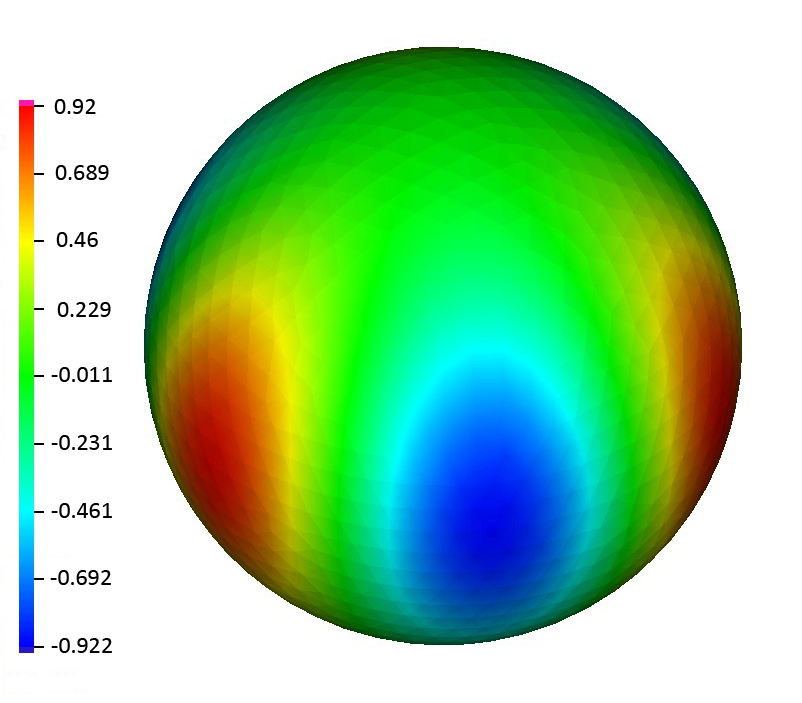}\hfill
\includegraphics[scale=.19]{./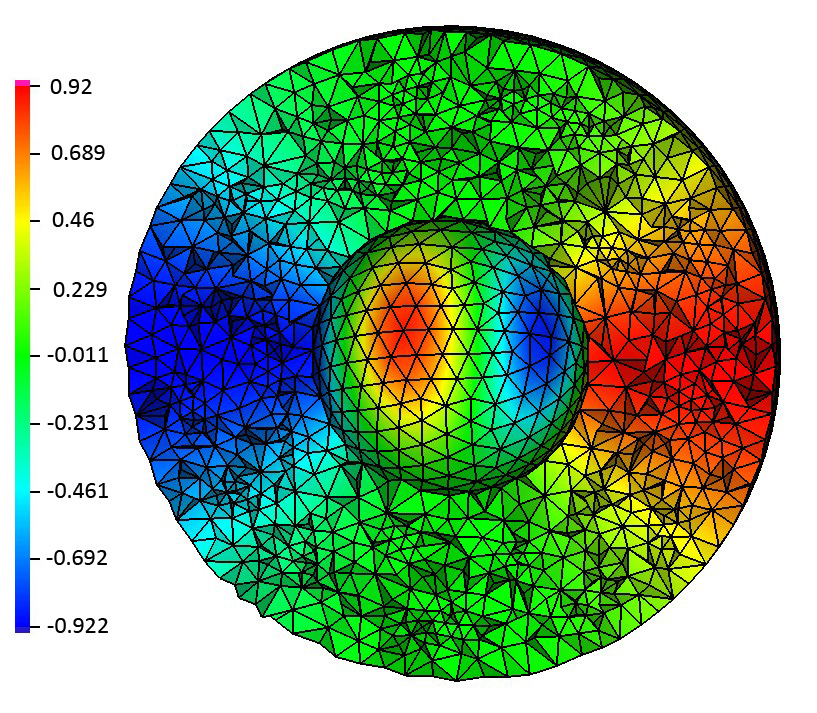}\hfill \\
\hfill\includegraphics[scale=.19]{./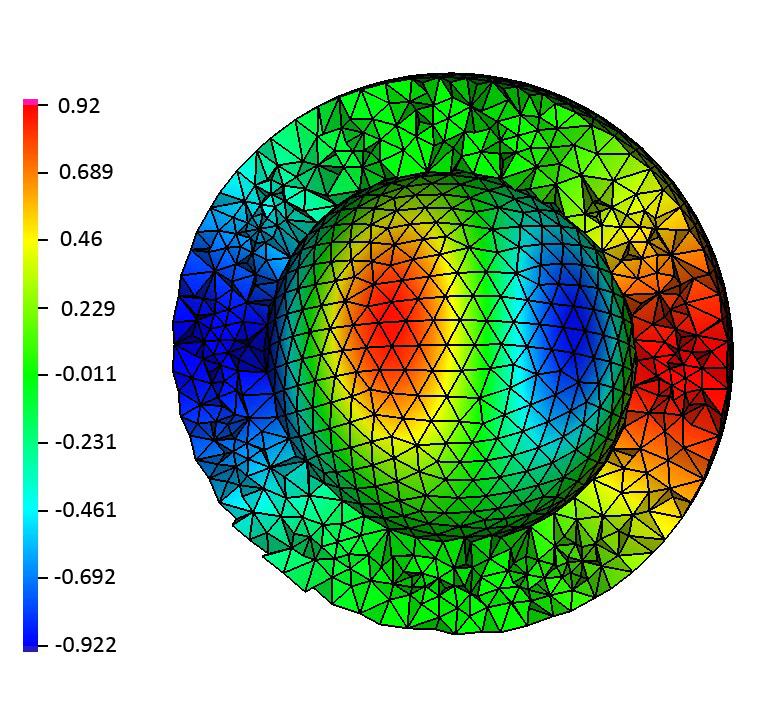}\hfill
\includegraphics[scale=.19]{./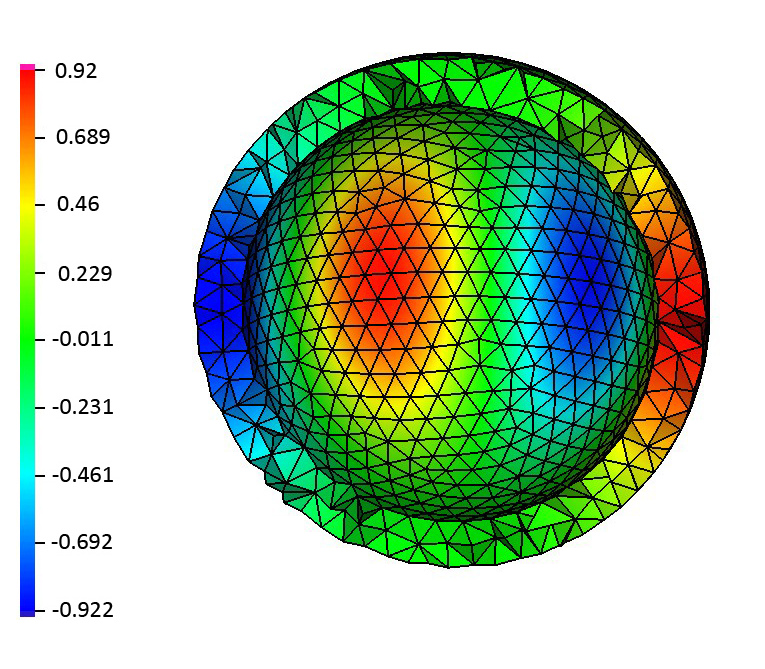}\hfill
\end{center}
\caption{The visualization of solution on the sphere and the cutaway of the volume grid
in $\Omega_d$ for $d=\{0.1,\, 0.2,\, 0.4\}$. \label{fig1}}
\end{figure}

\begin{table}
\begin{center}
\caption{Dependence of error norms on the domain width $d$ for the example with sphere. 
} \label{tab2}
\footnotesize
\begin{tabular}{rrrrr}\hline
$d$ &\#d.o.f. & $L^2$-norm & $C$-norm&  \# Iter. \\ \hline\\[-2ex]
0.4	&80442 &0.7700E-02	&0.6986E-02	&46 \\
0.2	&34305& 0.9389E-02	&0.9560E-02	&42 \\
0.1	&13560 &0.9579E-02	&0.1025E-01	&35 \\
\hline
\end{tabular}
\end{center}
\end{table}

In Tables~\ref{tab1}--\ref{tab2}, we show the norms of surface errors for the computed
finite element solutions and the number of preconditioned BCG iterations.
The surface errors were computed using the piecewise planar approximations of $\Gamma$
by $\Gamma_h$, where $\Gamma_h$ is the zero level set of the piecewise linear Lagrange interpolant
to the distance function of $\Gamma$. Table~\ref{tab1} shows the error norms for the case of a fixed
domain $\Omega_d$ and a sequence of discretizations. The formal convergence order $p$ was computed as
$$p=3\log\left(err_{1}/err_{2}\right)/\log\left((\#d.o.f.)_{1}/(\#d.o.f.)_{2}\right).$$
 We clearly observe the second order convergence
both when the exact distance function $\phi$ and the Hessian $\bH$ was used in \eqref{FEmeth}
and when $\phi$ and  $\bH$  were replaced by discrete $\phi_h$ and $\bH_h$. As in the two-dimension
case, $\phi_h$ is a piecewise linear Lagrange interpolant to $\phi$ and $\bH_h$ is a piecewise linear continuous tensor-function recovered from $\phi_h$ by the variation method.
In Table~\ref{tab2}, the results are shown for the case when the domain $\Omega_d$ shrinks towards the surface, while the mesh size $h$ was approximately the same for all three meshes.

The Figure~\ref{fig1} visualizes the solutions for various widths of the volume domains $\Omega_d$.
We see that the discrete solutions tend to be constant in the normal direction to the surface.

\begin{figure}

\begin{center}
\hfill\includegraphics[scale=.19]{./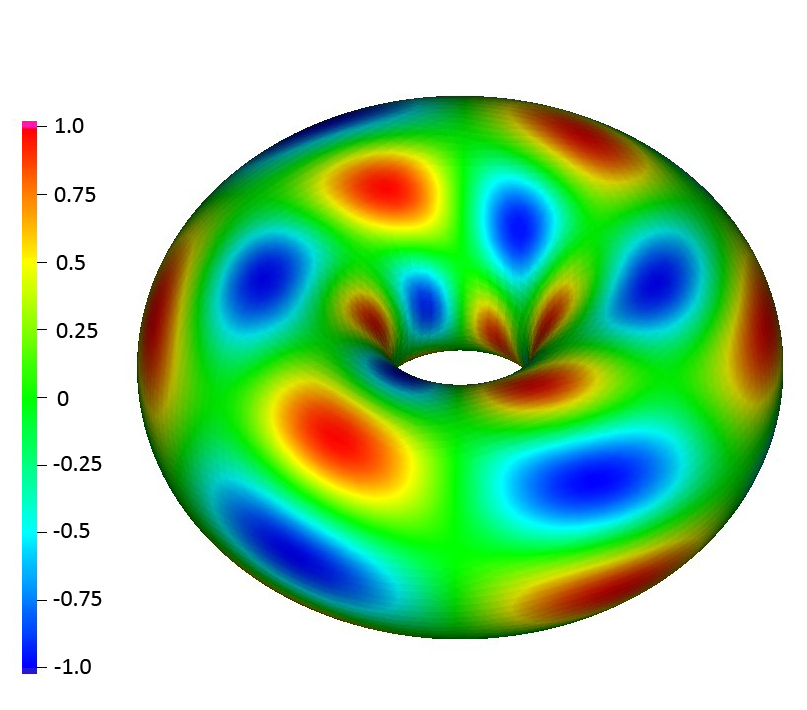}\hfill
\includegraphics[scale=.19]{./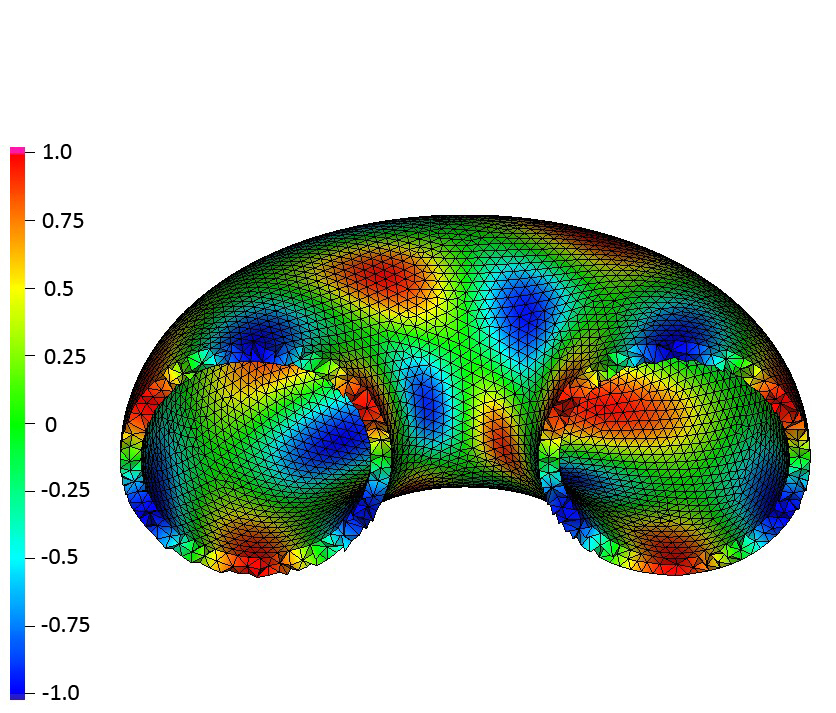}\hfill
\end{center}
\caption{The visualization of solution on the torus and the cutaway of the volume grid
in $\Omega_d$ for $d=0.1$. \label{fig2}}
\end{figure}

\begin{table}
\center
\caption{Norms of the errors  for the example of torus with exact and approximate Hessian. $\Omega_d$ is fixed with $d=0.1$. 
\label{tab3}}
\footnotesize
\begin{tabular}{l|r|rrrrr}\hline
&\#d.o.f. & $L^2$-norm & Order &$C$-norm & Order &\# Iter. \\ \hline\\[-2ex]
     &26257&   0.7826E-01&     &	0.1405E+00&      &18	\\
$\bH$&174021&  0.2843E-01&1.61 &	0.8400E-01& 0.82 &42  \\
     &1511742& 0.7780E-02&1.80 &	0.1077E-01& 2.85 &98  \\
\hline \\[-2ex]
       &26257&	    0.1144E+00&	    &0.1708E+00&     &20	\\
$\bH_h$&174021& 	0.7680E-01&0.63 &0.1569E+00&0.13&43  \\
       &1511742&    0.6888E-01&0.15 &0.9679E-01&0.67 &94  \\
\hline
\end{tabular}
\end{table}

\begin{figure}
\begin{center}
\hfill\includegraphics[scale=.18]{./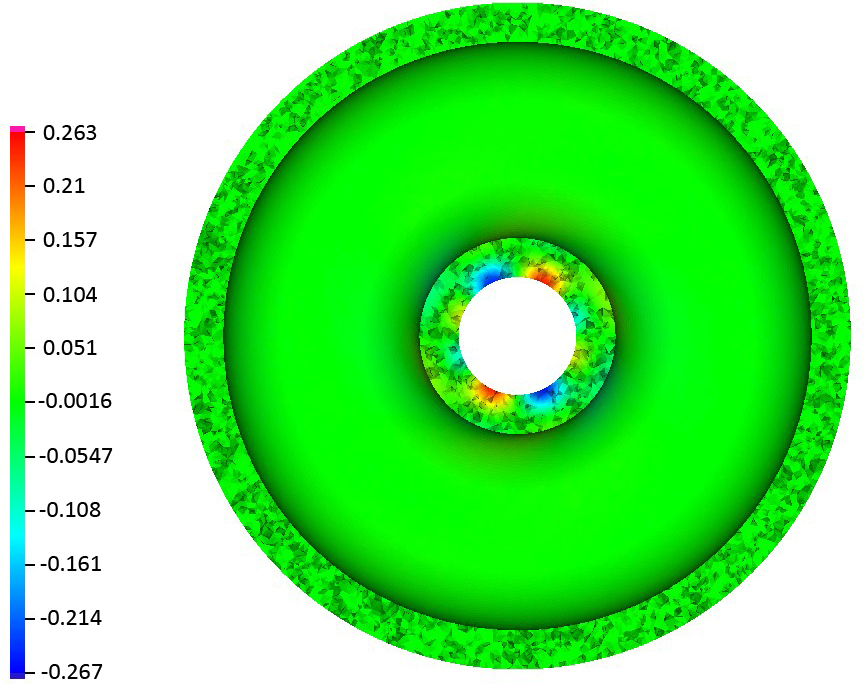}\hfill
\includegraphics[scale=.21]{./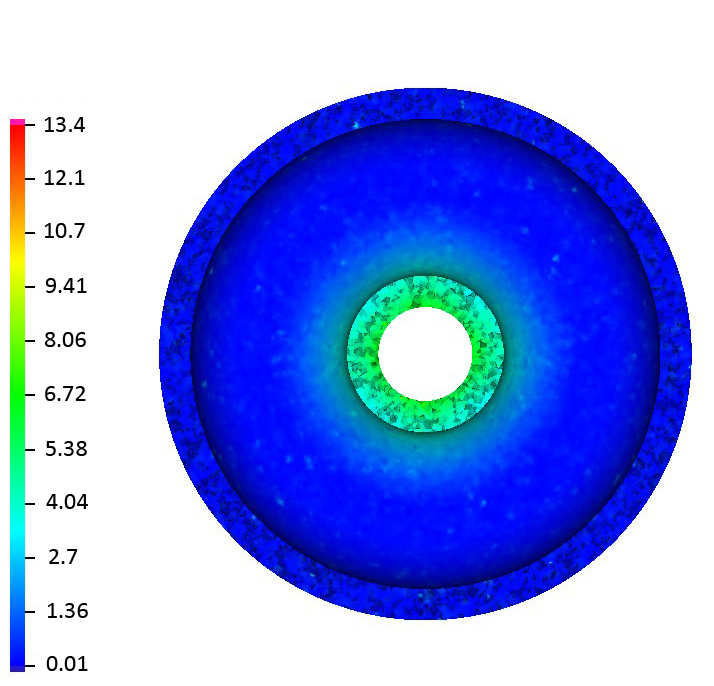}\hfill
\end{center}
\caption{Left: The finite element method error, with approximate Hessian.
Right: The error between discrete and approximate Hessian. \label{fig3}}
\end{figure}

We repeat the previous experiment,
but now with a  torus instead of the unit sphere.
Let $\Gamma = \{ \bx\in\Omega \mid r^2 = x_3^2 + (\sqrt{x_1^2 + x_2^2} - R)^2\}$.
We take $R= 1$ and $r= 0.6$. In the coordinate system $(\rho, \phi,
\theta)$, with
\[
    \bx = R\begin{pmatrix}\cos\phi \\ \sin\phi \\ 0 \end{pmatrix}
    + \rho\begin{pmatrix}\cos\phi\cos\theta \\ \sin\phi\cos\theta \\
    \sin\theta \end{pmatrix},
\]
the $\rho$-direction is normal to $\Gamma$, $\frac{\partial \bx}{\partial\rho}\perp\Gamma$ for $\bx\in\Gamma$.
The following solution $u$ and corresponding right-hand side
$f$ are constant in the normal direction:
\begin{equation}\label{polar2}
  \begin{split}
    u(\bx)&= \sin(3\phi)\cos( 3\theta + \phi),\\
    f(\bx)&= r^{-2} (9\sin( 3\phi)\cos( 3\theta + \phi)) \\
          & \quad - (R + r\cos( \theta)^{-2}(-10\sin( 3\phi)\cos(3\theta + \phi) - 6\cos( 3\phi)\sin( 3\theta + \phi)) \\
          & \quad -(r(R + r\cos( \theta))^{-1}(3\sin( \theta)\sin(
          3\phi)\sin( 3\theta + \phi)).
  \end{split}
\end{equation}

The surface norms of approximation errors for the example of torus are given in Table~\ref{tab3}. The solution is visualized in Figure~\ref{fig2}.
 Again, when  the exact Hessian is used, the convergence order is close to the second one.
However, when the exact Hessian is replaced by the recovered Hessian, the convergence significantly
deteriorates. This is opposite to the example with the sphere. A closer inspection reveals that the error is concentrated in the proximity of the inner ring of the torus, where the Gauss curvature is negative (see Figure~\ref{fig3}, left). Next, we look on the error of the discrete Hessian recovery:
$|\bH-\bH_h|:=\left(\sum_{i,j=1}^2 (\bH-\bH_h)^3_{i,j}\right)^{\frac12}$. The Figure~\ref{fig3}, right,
shows that the error  $|\bH-\bH_h|$ is large at the same region, near the inner ring of the torus. At this part of $\Omega_d$
 the Hessian is indefinite, it has non-zero values of different signs.

\section{Conclusions} \label{s_concl}

We studied a formulation and a finite element method for elliptic partial differential equation
posed on hypersurfaces in $\mathbb{R}^N$, $N=2,3$. The formulation uses an extension of the equation off the surface to a volume domain containing the surface. Unlike the original method from  \cite{BCOS01}, the extension introduced in the paper results in uniformly elliptic problems in the volume domain.  This enables a straightforward application of standard discretization techniques and put the problem into a well-established framework for analysis of elliptic PDEs, including numerical analysis.
For the standard Galerkin finite element
method we proved  new convergence estimates in the surface $L^2$ and $L^\infty$ norm.
Optimal convergence of the P1 finite element method was demonstrated numerically. The method, however, requires a reasonably good approximation of the Hessian for the signed distance function to the surface.

\section*{Acknowledgments}
This work has been supported in part by the Russian Foundation for Basic Research through grants 12-01-91330, 12-01-00283, 11-01-00971 and by the Russian Academy of Science  program ``Contemporary problems of theoretical mathematics'' through the project No. 01.2.00104588.


\end{document}